\newcommand{\Z}{\mathbb{Z}}
\newcommand{\Q}{\mathbb{Q}}
\newcommand{\R}{\mathbb{R}}
\newcommand{\ord}{\mathrm{ord}}
\newcommand{\LL}{\mathcal{L}}
\DeclareMathOperator{\Norm}{N}
\newtheorem*{theorem*}{Theorem}
\newtheorem{theorem}{Theorem}
\newtheorem{lemma}{Lemma}
\newtheorem{corollary}{Corollary}
\newtheorem{proposition}{Proposition}
\theoremstyle{remark}
\newtheorem{remark}{Remark}
\begin{document}
\title{Sums of Fibonacci numbers that are perfect powers}
\subjclass[2010]{11D61,11B39,11D45,11Y50}
\keywords{Baker's method, Fibonacci numbers, Diophantine equations}

\author{Volker Ziegler}
\address{V. Ziegler,
University of Salzburg,
Hellbrunnerstrasse 34/I,
A-5020 Salzburg, Austria}
\email{volker.ziegler\char'100sbg.ac.at}
\thanks{The author was supported by the Austrian Science Fund(FWF) under the project~I4406.}

\begin{abstract}
Let us denote by $F_n$ the $n$-th Fibonacci number. In this paper we show that for a fixed integer $y$ there exists at most one integer exponent $a>0$ such that the Diophantine equation $F_n+F_m=y^a$ has a solution $(n,m,a)$ in positive integers satisfying $n>m>0$, unless $y=2,3,4,6$ or $10$.
\end{abstract}

\maketitle

\section{Introduction}

Let $F_n$ be the $n$-th Fibonacci number defined by $F_0=0$, $F_1=1$ and $F_{n+2}=F_{n+1}+F_n$ for all $n\geq 0$. Some years ago Bravo and Luca \cite{Bravo:2016} considered the Diophantine equation
$$
F_n + F_m = 2^a,
$$
and showed that the only solutions $(n,m,a)\in \Z^3$ to this equation with $n>m>0$ are
$$(n,m,a)=(2,1,1),(4,1,2),(4,2,2),(5,4,3),(7,4,4).$$
Using the method due to Bravo and Luca \cite{Bravo:2016} we are not restricted to powers of $2$ and we can handle by the same method the Diophantine equation
\begin{equation}\label{eq:main}
F_n+F_m=y^a, \qquad n>m> 0,\;\; a>0
\end{equation}
for any fixed integer $y$. For instance in the case that $y=3$ we find the solutions $(n,m,a)=(3,1,1),(3,2,1),(6,1,2),(6,2,2)$. In this paper we want to show that Diophantine Equation \eqref{eq:main} has in almost all cases at most one solution for fixed integer $y$. That is the cases $y=2$ and $y=3$ are in some sense special cases. Let us note that if there exists a solution $(n,1,a)$ to \eqref{eq:main}, then $(n,2,a)$ is also a solution to \eqref{eq:main} since $F_1=F_2=1$. Thus speaking of the uniqueness of solutions to \eqref{eq:main} is only meaningful if we identify the solutions $(n,1,a)$ and $(n,2,a)$. Alternatively one can demand to consider only those solutions $(n,m,a)$ with $n>m>1$, which we will do.

\begin{theorem}\label{th:main}
 Let $y>1$ be a fixed integer, then there exists at most one solution $(n,m,a)\in \Z^3$ to the Diophantine equation
 \begin{equation}\label{eq:main-gen}
   F_n+F_m=y^a, \qquad n>m>1,\;\; a>0,
 \end{equation}
 unless $y= 2,3,4,6,10$. In the case that $y=2,3,4,6$ or $10$ all solutions are listed below:
 \begin{description}
  \item[$y=2$] $(n,m,a)=(4,2,2),(5,4,3),(7,4,4)$;
  \item[$y=3$] $(n,m,a)=(3,2,1),(6, 2, 2)$;
  \item[$y=4$] $(n,m,a)=(4, 2, 1), (7, 4, 2)$;
  \item[$y=6$] $(n,m,a)=(5, 2, 1), (9, 3, 2)$;
  \item[$y=10$] $(n,m,a)=(6, 3, 1), (16, 7, 3)$.
 \end{description}

\end{theorem}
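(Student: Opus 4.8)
The plan is to use Baker's method in two stages: a standard single–solution analysis that bounds all parameters in terms of $y$, and then — this being the real content of the uniqueness assertion — an analysis of a \emph{pair} of solutions arranged so that the unknown base $y$ drops out of the relevant linear forms in logarithms.

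First I would set up the Binet formulas: with $\alpha=(1+\sqrt5)/2$ and $\beta=(1-\sqrt5)/2$ one has $F_k=(\alpha^k-\beta^k)/\sqrt5$, and the crude bounds $\alpha^{n-2}<F_n\le F_n+F_m<\alpha^n$ give $a\log y\asymp n\log\alpha$, in particular $a<n$. A brief Zeckendorf argument shows that a positive integer has at most one representation as a sum $F_n+F_m$ with $n>m>1$; hence two distinct solutions of \eqref{eq:main-gen} must have distinct exponents, so we may label them $(n_1,m_1,a_1)$, $(n_2,m_2,a_2)$ with $a_1<a_2$, and — after disposing of the easy case $n_1=n_2$, which immediately forces $y$ small — we may also assume $n_1<n_2$. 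For a single solution, two applications of Matveev's lower bound for linear forms in logarithms are routine: from $\bigl|1-y^a\sqrt5\,\alpha^{-n}\bigr|<\sqrt5\,\alpha^{-(n-m)}$ one gets $n-m\ll(\log y)\log n$, and then from $\bigl|1-y^a\sqrt5\,\alpha^{-m}(\alpha^{n-m}+1)^{-1}\bigr|<2\alpha^{-n}$ one gets $n\ll(\log y)^2(\log n)^2$, hence an explicit bound $n<c_1(\log y)^2(\log\log y)^2$ with $c_1$ absolute; in particular $d_i:=n_i-m_i\ll(\log y)\log\log y$.

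The core of the proof is to play the two solutions against each other. Attach to the $i$-th solution the two linear forms $\Lambda_1^{(i)}=a_i\log y+\log\sqrt5-n_i\log\alpha$ and $\Lambda_2^{(i)}=a_i\log y+\log\sqrt5-m_i\log\alpha-\log(\alpha^{d_i}+1)$, which are small ($\ll\alpha^{-d_i}$ and $\ll\alpha^{-n_i}$ respectively). The key observation is that the integer combinations $a_2\Lambda_1^{(1)}-a_1\Lambda_1^{(2)}$, $a_2\Lambda_2^{(1)}-a_1\Lambda_1^{(2)}$ and $a_2\Lambda_2^{(1)}-a_1\Lambda_2^{(2)}$ each have vanishing coefficient of $\log y$, hence are linear forms in the \emph{fixed} logarithms $\log\alpha$, $\log\sqrt5$, $\log(\alpha^{d_1}+1)$, $\log(\alpha^{d_2}+1)$, with integer coefficients of size $\ll n_2^2$. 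Running the first combination (a form in two fixed logarithms) through the sharp lower bound for linear forms in two logarithms forces $\min(d_1,d_2)\ll\log n_2$; feeding this back into the second (three–term) combination forces the remaining $d_i$ down to $\ll(\log\log y)^2$; and feeding both improved bounds into the third (four–term) combination, whose absolute value is $\ll(\log y)^{O(1)}/y$, yields via Matveev a lower bound of the shape $\exp\bigl(-c(\log\log y)^{O(1)}\bigr)$, which is incompatible with the upper bound once $\log y$ is large compared with a power of $\log\log y$. This gives a contradiction for all $y$ exceeding an explicit (astronomically large) bound $Y_0$. I expect this iterative step to be the main obstacle: one must choose the combinations and the order of the estimates so that every round strictly lowers the exponents occurring, and keep scrupulous track of which quantities depend on $y$ and which (the heights of $\alpha$, $\sqrt5$, $\alpha^{d_i}+1$) do not.

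Finally I would treat $y\le Y_0$. A direct search over $y$ is hopeless, but the combinations above do not involve $y$ at all, so a Baker–Davenport / LLL reduction applied to them — with coefficient sizes controlled by the single–solution bound $n_i<c_1(\log Y_0)^2(\log\log Y_0)^2$ — lowers $n_1,m_1,n_2,m_2,a_1,a_2$ to small absolute bounds after finitely many rounds. Once $n_2$ and $m_2$ are bounded absolutely, $y^{a_2}=F_{n_2}+F_{m_2}$ ranges over an explicit finite set, which determines $y$ up to finitely many candidates; testing each candidate against $(F_{n_1}+F_{m_1})^{a_2}=(F_{n_2}+F_{m_2})^{a_1}$ (equivalently, checking directly which small $y$ admit two solutions) leaves exactly $y=2,3,4,6,10$ together with the solution lists stated in the theorem.
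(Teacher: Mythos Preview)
Your overall architecture coincides with the paper's: a Bravo--Luca single-solution bound, then elimination of $\log y$ from pairs of the four linear forms to obtain forms $\Lambda_3,\Lambda_4,\Lambda_5$ in the fixed logarithms $\log\alpha,\log\sqrt5,\log\tau(d_i)$ (with $\tau(t)=(\alpha^t+1)/\sqrt5$), successively bounding $\min(d_1,d_2)$, then $\max(d_1,d_2)$ or $n_1$, then $n_1$ itself, hence $\log y$, hence $n_2$, followed by LLL/continued-fraction reduction and a finite search.

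The one substantive gap in your outline is the non-vanishing of the combined forms, which you do not address and which is not routine. Before invoking Matveev or Laurent you must know $\Lambda_3,\Lambda_4,\Lambda_5\ne0$, but $\tau(t)$ satisfies the exceptional relations $\tau(1)=\alpha^2/\sqrt5$, $\tau(2)=\alpha$, $\tau(10)=5\alpha^5$, so for $d_i\in\{1,2,10\}$ the logarithms $\log\alpha,\log\sqrt5,\log\tau(d_i)$ are $\Q$-linearly dependent and the combined forms can collapse. The paper devotes a full section to this: it proves, via an effective primitive-divisor argument for $\alpha^n+1$ (after Schinzel), that $t=1,2,10$ are the only exceptions, and then handles each by hand. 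In particular $\Lambda_5$ genuinely can vanish (when $d_1=d_2=2$ and a coefficient relation holds); the paper shows this forces $y=2$. Your ``four-term'' step would run into exactly this obstruction.

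A smaller point: your second combination $a_2\Lambda_2^{(1)}-a_1\Lambda_1^{(2)}$ has upper bound $\ll a_2\alpha^{-n_1}+a_1\alpha^{-d_2}$, so Matveev bounds $\min(n_1,d_2)$, not automatically ``the remaining $d_i$''. The paper keeps both branches alive (one feeds into $\Lambda_5$, the other already bounds $n_1$ and hence $\log y$ directly); your outline should do the same.
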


A direct consequence of this theorem is:

\begin{corollary}\label{cor:main}
 Let $p$ be a fixed prime, then the  Diophantine equation
 \begin{equation}\label{eq:main-p}
   F_n+F_m=p^a, \qquad n>m>1,\;\; a>0,
 \end{equation}
 has at most one solution $(n,m,a)\in \Z^3$, unless $p= 2$ or $p=3$.
\end{corollary}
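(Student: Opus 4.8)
The plan is to obtain Corollary~\ref{cor:main} as an immediate specialisation of Theorem~\ref{th:main}, so the work consists almost entirely of a bookkeeping observation rather than any new Diophantine analysis. The key point is that the exceptional set of bases in Theorem~\ref{th:main} is $\{2,3,4,6,10\}$, and among these integers only $2$ and $3$ are prime. Hence the first (and essentially only) step is: if $p$ is a prime with $p\neq 2$ and $p\neq 3$, then automatically $p\notin\{2,3,4,6,10\}$, because $4$, $6$ and $10$ are composite. Applying Theorem~\ref{th:main} with $y=p$ then yields directly that \eqref{eq:main-p} has at most one solution $(n,m,a)\in\Z^3$ with $n>m>1$ and $a>0$, which is exactly the assertion of the corollary; here one just checks that the range $n>m>1,\ a>0$ in \eqref{eq:main-p} is literally the range used in \eqref{eq:main-gen}.

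For completeness one would also record why the primes $2$ and $3$ have to be kept as exceptions. The lists in Theorem~\ref{th:main} give, for $y=2$, the three solutions $(4,2,2),(5,4,3),(7,4,4)$, and for $y=3$ the two solutions $(3,2,1),(6,2,2)$, all with $m>1$. Thus \eqref{eq:main-p} genuinely fails to have a unique solution when $p=2$ or $p=3$, so the exceptional cases in the corollary can neither be enlarged (no other prime is exceptional) nor shrunk.

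I do not anticipate any real obstacle here: the corollary carries no content beyond Theorem~\ref{th:main} combined with the trivial arithmetic fact that $4$, $6$ and $10$ are not prime, and in particular no appeal to Baker's method, linear forms in logarithms, or numerical reduction is required. If anything, the only subtlety worth a word is that the convention $m>1$ (rather than $m>0$) is what makes the statement clean, since otherwise every solution $(n,1,a)$ would be duplicated by $(n,2,a)$ via $F_1=F_2$; with that convention in force the deduction from Theorem~\ref{th:main} is a single line.
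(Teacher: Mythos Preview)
Your proposal is correct and matches the paper's own treatment: the paper simply states that Corollary~\ref{cor:main} is a direct consequence of Theorem~\ref{th:main}, without writing out any further argument. Your observation that among the exceptional bases $\{2,3,4,6,10\}$ only $2$ and $3$ are prime is exactly the one-line deduction intended.
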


We want to note that Equation \eqref{eq:main} has been recently studied by Luca and Patel~\cite{Luca:2018}, who found all solutions to \eqref{eq:main} with $a>1$, under the assumption that $n\equiv m \mod 2$. Furthermore, Kebli et.al. \cite{Kebli:2021} considered a similar but slightly different problem. They considered the Diophantine equation
\begin{equation}\label{eq:Kebli}
F_n\pm F_m=y^a \qquad n\geq m\geq 0,\;\; a\geq 2.
\end{equation}
Note that the essential difference is that they exclude solutions with $a=1$. In this case, they proved upper bounds for $n$ and $a$ depending on $y$ (which we will reproduce in Proposition \ref{prop:Bravo-Luca-bound}) and showed that there exists no solution if $2\leq y \leq 1000$ other than those solutions satisfying $0\leq m \leq n \leq 36$. Under the assumption that the $abc$-conjecture holds, they also showed that there exist at most finitely many solutions $(n,m,a,y)$ to \eqref{eq:Kebli}.

In view of the results due to Kebli et.al. \cite{Kebli:2021} we obtain the following result as a direct consequence from Theorem \ref{th:main}:

\begin{corollary}
 Assume that $y\neq 2,3,4,6,10$. The Diophantine equation
 $$F_n+F_m=y^a, \qquad n>m>0,\;\; a\geq 2$$
 has no solution $(n,m,a)$, if $y$ can be represented as the sum of two non-zero, distinct Fibonacci numbers.
\end{corollary}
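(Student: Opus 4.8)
The plan is to deduce this statement immediately from Theorem~\ref{th:main}, exploiting the observation that a representation of $y$ as a sum of two distinct non-zero Fibonacci numbers is itself a solution of \eqref{eq:main-gen} with exponent $a=1$. First I would write $y=F_s+F_t$ where $F_s$ and $F_t$ are non-zero and distinct, say $F_s>F_t\geq 1$. From $F_s\geq 2$ one gets $s\geq 3$, and since the Fibonacci sequence is strictly increasing on indices $\geq 2$ one checks in every case that $s>t\geq 1$. If $t=1$ I would replace it by $t=2$, which is harmless because $F_1=F_2=1$; setting $m_0:=\max(t,2)$ one then has $s>m_0>1$, so $(n,m,a)=(s,m_0,1)$ is a genuine solution of \eqref{eq:main-gen}.

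Next, since by hypothesis $y\neq 2,3,4,6,10$, Theorem~\ref{th:main} guarantees that \eqref{eq:main-gen} has at most one solution; hence $(s,m_0,1)$ is \emph{the} solution. To conclude I would argue by contradiction: suppose $F_n+F_m=y^a$ admits a solution with $n>m>0$ and $a\geq 2$. After normalising it to the range $n>m>1$ by replacing $m=1$ with $m=2$ if necessary (again using $F_1=F_2$), this yields a solution of \eqref{eq:main-gen} with exponent $a\geq 2$, which is necessarily different from $(s,m_0,1)$ because the exponents do not agree. This contradicts the uniqueness just established, so no solution with $a\geq 2$ exists.

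There is no real obstacle here: the statement is a formal corollary of Theorem~\ref{th:main}. The only points that need a moment's care are the index bookkeeping — passing between the constraints $m>0$ and $m>1$ via the identity $F_1=F_2=1$ — and the remark that one only needs the \emph{existence} of a single representation of $y$ as a sum of two distinct non-zero Fibonacci numbers, not its uniqueness; indeed, if $y$ admitted two essentially different such representations, that would already contradict Theorem~\ref{th:main}.
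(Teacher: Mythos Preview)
Your argument is correct and is precisely the one the paper has in mind: the paper states the corollary as ``a direct consequence from Theorem~\ref{th:main}'' without spelling out a proof, and your deduction---a representation $y=F_s+F_t$ gives a solution of \eqref{eq:main-gen} with exponent $1$, so uniqueness forbids any further solution with $a\geq 2$---is exactly that direct consequence. The index bookkeeping you carry out (passing from $m>0$ to $m>1$ via $F_1=F_2$) is the only thing that needs saying, and you handle it correctly.
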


In the next section we will gather several useful results. In particular, we state lower bounds for linear forms in logarithms due to Matveev \cite{Matveev:2000} and Laurent \cite{Laurent:2008}, the LLL-reduction method, Baker-Davenport reduction and also some results concerning continued fractions. In Section \ref{sec:Bravo} we will recap the strategy due to Bravo and Luca and find an upper bound for a solution $(n,m,a)$ to \eqref{eq:main-gen}, which depends on $y$. In order to apply lower bounds for linear forms in logarithms we have to ensure that these linear forms do not vanish. Therefore we have to discuss the multiplicative dependence of $\alpha=\frac{1+\sqrt{5}}2$, $\sqrt{5}$ and $\tau(t)=\frac{\alpha^t+1}{\sqrt 5}$ for positive integers $t$. This will be done in Section \ref{sec:mult-dep}. The following two sections (Sections \ref{sec:bound} and \ref{sec:reduction}) are the heart of the paper. Assuming that there exist two solutions $(n_1,m_1,a_1)$ and $(n_2,m_2,a_2)$, with $a_1<a_2$ we proof an absolute upper bound for $n_2$ in Section \ref{sec:bound}. Using continued fractions, and LLL-reduction we are left with $\sim 110000$ possible candidates $y$ such that Diophantine equation \eqref{eq:main-gen} has two solutions. In the final section we discuss an implementation of the method due to Bravo and Luca \cite{Bravo:2014} to find for each possible candidate all solutions to \eqref{eq:main-gen}. This will complete our proof of the main theorem, Theorem \ref{th:main}.

\section{Some useful tools}

First, let us state the well known Binet formula for the Fibonacci sequence
$$
F_n=\frac{\alpha^n-\beta^n}{\alpha-\beta}=\frac{\alpha^n-\beta^n}{\sqrt{5}},
$$
where $\alpha=\frac{1+\sqrt{5}}2$ and $\beta=\frac{1-\sqrt{5}}2$ are the characteristic roots
of the characteristic polynomial $X^2-X-1$ of the Fibonacci sequence. Let us note that $\beta=-\alpha^{-1}$.

The Binet formula immediately yields for $n>1$ the inequalities 
\begin{equation}\label{eq:Fib-ieq}
0.38 \alpha^n <\alpha^n \frac{1-\alpha^{-4}}{\sqrt 5}\leq F_n=\alpha^n\frac{1-(-1)^n \alpha^{-2n}}{\sqrt 5}\leq \alpha^n \frac{1+\alpha^{-6}}{\sqrt 5}< 0.48 \alpha^n.
\end{equation}

\begin{lemma}\label{lem:el-bounds}
 Assume that $n>m>1$, and that $F_n+F_m=y^a$ for some integer $y>1$. Then we have
 \begin{itemize}
  \item $0.38 \alpha^n<F_n+F_m<0.78 \alpha^n$ and
  \item $a<\frac{n \log \alpha}{\log y}<0.7 n$.
 \end{itemize}
\end{lemma}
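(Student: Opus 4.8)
The plan is to derive both estimates directly from the two-sided bound \eqref{eq:Fib-ieq} for Fibonacci numbers, which holds for every index exceeding $1$. First I would establish the lower bound on $F_n+F_m$: since $n>m>1$ we already have $F_n>0.38\alpha^n$ from \eqref{eq:Fib-ieq}, and adding the positive quantity $F_m$ only increases the sum, so $0.38\alpha^n<F_n+F_m$ is immediate. For the upper bound I would use $F_n<0.48\alpha^n$ together with $F_m\leq F_{n-1}<0.48\alpha^{n-1}=0.48\alpha^{-1}\alpha^n$; since $m\le n-1$ and $\alpha^{-1}=\tfrac{\sqrt5-1}{2}\approx 0.618$, this gives $F_n+F_m<0.48(1+\alpha^{-1})\alpha^n$, and a quick check that $0.48(1+\alpha^{-1})<0.78$ finishes it. (One must be slightly careful that $m>1$ is what legitimizes applying \eqref{eq:Fib-ieq} to $F_m$; the hypothesis supplies exactly this.)

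For the second statement, I would combine the just-proved inequality $F_n+F_m>0.38\alpha^n$ with the hypothesis $F_n+F_m=y^a$ and the upper bound $F_n+F_m<0.78\alpha^n<\alpha^n$ to get $y^a<\alpha^n$, hence $a\log y<n\log\alpha$, i.e. $a<\dfrac{n\log\alpha}{\log y}$. To pass to the clean bound $a<0.7n$, I would use that $y\ge 2$ so $\log y\ge\log 2$, giving $\dfrac{\log\alpha}{\log y}\le\dfrac{\log\alpha}{\log 2}=\dfrac{\log\big((1+\sqrt5)/2\big)}{\log 2}\approx 0.6942<0.7$. Thus $a<0.7n$, as claimed.

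There is essentially no serious obstacle here: the whole lemma is bookkeeping with the Binet-based inequalities in \eqref{eq:Fib-ieq}, and the only point requiring any attention is making sure the numerical constants propagate correctly (in particular that $0.48(1+\alpha^{-1})<0.78$ and $\log\alpha/\log 2<0.7$), together with invoking the hypothesis $m>1$ so that \eqref{eq:Fib-ieq} is applicable to the smaller index as well.
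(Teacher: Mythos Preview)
Your proof is correct and follows essentially the same route as the paper: both arguments bound $F_n+F_m$ above by $0.48\alpha^n+0.48\alpha^{n-1}=0.48(1+\alpha^{-1})\alpha^n<0.78\alpha^n$, then take logarithms of $y^a<\alpha^n$ and use $y\ge 2$ to get $\log\alpha/\log y<0.7$. The only cosmetic difference is that you spell out the numerical check $0.48(1+\alpha^{-1})<0.78$ explicitly; note also that the lower bound $F_n+F_m>0.38\alpha^n$ is not actually needed for the second item (only the upper bound is), despite your phrasing.
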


\begin{proof}
 Due to \eqref{eq:Fib-ieq} we have
 $$ 0.38\alpha^n<F_n<F_n+F_m<0.48 \alpha^n+0.48\alpha^{n-1}<0.78 \alpha^n$$
 which proves the first statement. For the second statement we consider the inequality
 $$y^a=F_n+F_m<0.78 \alpha^n$$
 and take logarithms. This yields $a \log y< n\log \alpha$ and by noting that $n,y\geq 2$ we immediately get the second statement.
\end{proof}

At the end of the proof of Theorem \ref{th:main} we will use the concept of the so-called Zeckendorf expansion of a number. The famous theorem of Zeckendorf states that every integer $N$ can be written uniquely as a sum of non consecutive Fibonacci numbers, i.e.
$$N=\sum_{i=1}^k F_{d_i},$$
with $1<d_1<d_2<\dots<d_k$ and $d_{i}-d_{i-1}>1$ for all $i=2,\dots,k$. This expansion can be easily computed by a greedy digit algorithm. That is, we compute the Zeckendorf expansion inductively. Let $F_{d_k}\leq N$ be the largest Fibonacci number smaller or equal to $N$ and assume that we have already found the Zeckendorf expansion 
$$N-F_{d_k}= \sum_{i=1}^{k-1} F_{d_i},$$
of $N-F_{d_k}$, then
$$N=\sum_{i=1}^k F_{d_i}$$
is the Zeckendorf expansion of $N$. 

Furthermore let us note a simple fact form calculus. If $x\in \R$ satisfies $|x|<1/2$, then 
\begin{align*}
|\log(1+x)|&<|x-x^2/2+-\dots|\\
&<|x|+\frac{|x|^2+|x|^3+\dots}2\\
&<|x|\left(1+\frac{|x|}{2(1-|x|)}\right)<\frac 32 |x|.
\end{align*}
Similarly we obtain the lower bound $|\log(1+x)|>\frac 12 |x|$ provided that $|x|<1/2$. We will use these inequalities frequently throughout the paper.

In order to obtain absolute upper bounds we will apply results on lower bounds for linear forms in  logarithms. To state this results we need the notion of height. Therefore let $\alpha \neq 0$ be an algebraic number of degree $d$ and let
$$a_0(X-\alpha_1)\cdots (X-\alpha_d) \in \Z[X]$$
be the minimal polynomial of $\alpha$. Then the absolute logarithmic Weil height is defined by
$$h(\alpha)=\frac 1d \left(\log |a_0|+\sum_{i=1}^d \max\{0,\log|\alpha_i|\}\right).$$ 
With this basic notation we have the following result on lower bounds for linear forms in logarithms due to Matveev \cite{Matveev:2000}.

\begin{lemma}\label{lem:Matveev}
Denote by $\alpha_1,\dots ,\alpha_n$ algebraic numbers, $\neq 0,1$, by $\log \alpha_1,\dots,\log \alpha_n$ determinations of their logarithms, by $D$ the degree over $\Q$ of the number field $K=\Q(\alpha_1,\dots ,\alpha_n)$, and by $b_1,\dots, b_n$ rational integers. Furthermore let $\kappa= 1$ if $K$ is real and $\kappa= 2$ otherwise. For all integers $j$ with $1\leq j \leq n$ choose
$$A_j\geq h'(\alpha_j)=\max \{Dh(\alpha_j),|\log \alpha_j|,0.16\},$$ 
and set 
$$B= \max \left\{\{1\}\cup \{|b_j|A_j/A_n\: :\:  1\leq j \leq n\}\right\}.$$
Assume that
$$\Lambda=b_1\log \alpha_1 +\cdots+b_n\log\alpha_n\neq  0.$$
Then
$$\log|\Lambda|\geq - C(n,\kappa) \max\{1,n/6\}C_0 W_0 D^2\Omega$$
with
\begin{gather*}
 \Omega=A_1\cdots A_n\\
 C(n,\kappa)=\frac{16}{n! \kappa} e^n (2n+ 1 + 2\kappa)(n+ 2)(4(n+ 1))^{n+1}\left(\frac {en}2 \right)^\kappa,\\
 C_0= \log(e^{4.4n+7}n^{5.5}D^2\log(eD)),\quad  W_0= \log(1.5eBD\log(eD))
\end{gather*}
\end{lemma}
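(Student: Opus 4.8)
This is Matveev's theorem on linear forms in logarithms, a deep result of transcendence theory; in a paper of the present type one invokes \cite{Matveev:2000} verbatim, and the remarks below only indicate the shape of the argument behind such estimates. The plan is to argue by contradiction: assume $\Lambda\neq 0$ but that $|\Lambda|$ is smaller than the asserted bound, and then manufacture an auxiliary analytic object which is forced to vanish to unexpectedly high order, in contradiction with a zero estimate.

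Concretely, first I would fix large integer parameters $L$ and $S$ and, after the usual normalisation that absorbs $b_n\log\alpha_n$, seek an auxiliary polynomial $P(X_1,\dots,X_n)$ with rational integer coefficients of controlled size such that the analytic function $\Phi(z)=P(z,\alpha_1^{z},\dots,\alpha_{n-1}^{z})$ vanishes, together with many of its derivatives, at the integers $z=0,1,\dots,S-1$. Existence of a non-trivial such $P$ follows from Siegel's lemma once the number of coefficients of $P$ exceeds the number of linear conditions imposed, which is arranged by taking $L$ large relative to $S$ and to the quantities $A_j$. The hypothesis that $|\Lambda|$ is tiny is exactly what lets one treat $\alpha_n^{z}$ as $\exp(z\varepsilon)$ with $\varepsilon$ negligible and thereby keep all analytic estimates under control.

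The second step is extrapolation: combining the smallness of $|\Lambda|$ with a Schwarz-lemma (maximum modulus) estimate on a suitable disc shows that $\Phi$ in fact vanishes, to somewhat smaller order, at many additional integer points; iterating this inflates the vanishing set until a zero estimate for a non-zero polynomial evaluated on the multiplicatively structured set generated by $\alpha_1,\dots,\alpha_n$ is violated. The only escape would be a multiplicative relation among the $\alpha_j$ incompatible with $\Lambda\neq 0$, so one reaches a contradiction; the explicit constants $C(n,\kappa)$, $C_0$ and $W_0$ then emerge from optimising the free parameters $L$, $S$ and carefully tracking every inequality, while the dichotomy between the real and complex cases (the factor $\kappa$) is handled by a Kummer-type descent controlling roots of unity in $K=\Q(\alpha_1,\dots,\alpha_n)$.

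The main obstacle is not the mechanism --- qualitatively it is just ``too many zeros for a non-zero polynomial'' --- but the extraction of fully explicit, usable constants. This is why one quotes Matveev rather than reproving the statement: the interpolation-determinant refinement (Laurent--Mignotte--Nesterenko) needs sharp analytic bounds for determinants of matrices whose entries are derivatives of monomials, and squeezing the numerology so that the resulting bound is strong enough to drive a concrete computation such as ours requires precisely the delicate bookkeeping that constitutes Matveev's paper.
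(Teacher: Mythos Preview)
Your proposal is correct in spirit and matches the paper's treatment: the paper does not prove this lemma at all but simply quotes it from Matveev \cite{Matveev:2000}, exactly as you say one should. Your additional sketch of the auxiliary-function method (Siegel's lemma, extrapolation via Schwarz, zero estimate) is accurate background that the paper does not include, so in that sense you have gone beyond what the paper does rather than diverged from it.
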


In all our applications we will be in the situation, where $K=\Q(\sqrt 5)$ and $n=3$ or $n=2$. In the case of three logarithms Matveev's lower bound is
$$\log |\Lambda|> - 7.26\cdot 10^{10}\Omega \log(13.81 B).$$
Also note that instead of $B$ one can use $B^*=\max_{1\leq j \leq n}\{b_j\}$ in Matveev's bound.

However, in the case that the number of logarithms is $n=2$ we have numerically rather good results due to Laurent \cite{Laurent:2008}. In particular, we will use the following result:

\begin{lemma}\label{lem:Laurent}
 Suppose that the numbers $\alpha_1$, $\alpha_2$, $\log \alpha_1$, $\log \alpha_2$ are  real and positive and that $\alpha_1$ and $\alpha_2$ are multiplicatively independent. Then for positive integers $b_1$ und $b_2$ we have
 $$\log |b_1\log \alpha_1 -b_2\log \alpha_2| > - 17.9 D^4 \left(\max\{\log b' + 0.38, 30/D, 1\}\right)^2 \log A_1 \log A_2,$$
 where $D=[\Q(\alpha_1,\alpha_2):\Q]$,
 $$\log A_i\geq \max\{h(\alpha_i),\log \alpha_i/D,1/D\}$$
 for $i=1,2$ and
 $$b'=\frac{b_1}{D\log A_2}+\frac{b_2}{D\log A_1}.$$
\end{lemma}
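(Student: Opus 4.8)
The plan is to prove this by the interpolation determinant method, which is the standard and most efficient route to explicit lower bounds for linear forms in two logarithms. Write $\Lambda = b_2\log\alpha_2 - b_1\log\alpha_1$, and observe first that $\Lambda\neq 0$: if it vanished we would have $\alpha_1^{b_1}=\alpha_2^{b_2}$ with $b_1,b_2>0$, contradicting the multiplicative independence of $\alpha_1$ and $\alpha_2$. I would then argue by contradiction, assuming that $\log|\Lambda|$ is smaller than the claimed bound, and reach a contradiction by confronting an analytic upper bound with an arithmetic lower bound for a suitable determinant.

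First I would set up the auxiliary determinant. After choosing integer parameters $L_0$ (a degree bound for a polynomial factor), $L_1,L_2$ (ranges of exponents), and a number $N$ of interpolation points, I would form the square matrix whose entries are the values $\varphi_\lambda(s)=s^{\lambda_0}\alpha_1^{\lambda_1 s}\alpha_2^{\lambda_2 s}$ of the monomial-times-exponential functions $\varphi_\lambda(z)=z^{\lambda_0}e^{(\lambda_1\log\alpha_1+\lambda_2\log\alpha_2)z}$, taken over an index set and a set of integer points $s$ of equal cardinality $N$, so that the determinant $\Delta$ is well defined. The point of the construction is that, since $\Lambda$ is assumed tiny, the two exponential directions $\log\alpha_1$ and $\log\alpha_2$ are nearly proportional to $b_2$ and $b_1$, so that up to a factor whose size is governed by $|\Lambda|$ each $\varphi_\lambda$ depends essentially on the single quantity $\lambda_1+\lambda_2 b_1/b_2$; this near-collinearity is exactly what will force $\Delta$ to be analytically very small.

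The heart of the argument is the comparison of two estimates for $|\Delta|$. For the analytic upper bound I would expand $\Delta$ by multilinearity, replace one interpolation point by a complex variable, and apply the Schwarz lemma (maximum modulus on a disc of radius of order $N$) to the resulting entire function; the near-collinearity of the exponents produces a high order of vanishing, giving a bound of the shape $\log|\Delta|\le -c_1(\ldots)+c_2 N\log|\Lambda|$ with a strongly negative main term, so that a sufficiently small $\Lambda$ makes $|\Delta|$ extremely small. For the arithmetic lower bound I would note that $\Delta\in\Q(\alpha_1,\alpha_2)$, control its denominator and its house in terms of $h(\alpha_1)$, $h(\alpha_2)$ and $D$, and invoke the Liouville inequality (equivalently the product formula): provided $\Delta\neq 0$, one gets $\log|\Delta|\ge -c_3 N(\ldots)$, a bound that cannot be as negative as the analytic one. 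Comparing the two and optimizing over the parameters $L_0,L_1,L_2,N$ yields the stated inequality, with the precise constant $17.9$, the shift $0.38$, the factor $D^4$, the product $\log A_1\log A_2$, and the quantity $b'$ all emerging from that optimization.

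The main obstacle is establishing that $\Delta\neq 0$, that is, the zero estimate; this is where multiplicative independence is genuinely used. One must rule out that a nonzero polynomial $P(X_0,X_1,X_2)=\sum c_\lambda X_0^{\lambda_0}X_1^{\lambda_1}X_2^{\lambda_2}$ of controlled degree vanishes at all the points $(s,\alpha_1^s,\alpha_2^s)$ in the chosen range, which is precisely a nonvanishing statement on the algebraic group $\mathbb{G}_a\times\mathbb{G}_m^2$. I would obtain this from a zero estimate on group varieties in the style of Philippon, or by the self-contained combinatorial argument available in this two-dimensional setting, verifying that the numerical parameters are large enough for the nonvanishing to hold. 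The remaining difficulty is the bookkeeping: one must track every constant through both the analytic and the arithmetic estimates and through the parameter optimization in order to reach exactly the constant $17.9$, rather than merely some admissible constant.
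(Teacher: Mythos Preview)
Your sketch is a reasonable high-level outline of the interpolation determinant method that Laurent uses, but note that the paper does not prove this lemma at all: it is quoted verbatim as a known result from Laurent \cite{Laurent:2008} and used as a black box. There is therefore nothing to compare your argument against in this paper; the ``paper's own proof'' is simply a citation.

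If you intend your outline to stand as an actual proof, be aware that it is far from complete. The passage from ``near-collinearity'' to a quantitative analytic upper bound for the determinant requires a careful multiplicity estimate and an explicit Schwarz-type lemma with tracked constants; the zero estimate on $\mathbb{G}_a\times\mathbb{G}_m^2$ is a substantial piece of work in its own right; and, as you yourself acknowledge, obtaining the specific constant $17.9$ (rather than some unspecified absolute constant) demands a lengthy and delicate optimization over all the auxiliary parameters. None of this is carried out in your proposal. For the purposes of this paper the correct move is simply to cite Laurent, as the author does.
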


In all of our applications the algebraic numbers appearing in the linear form of logarithms $\Lambda$ will be $y$, $\alpha$, $\sqrt{5}$ and $\tau(t)=\frac{\alpha^t+1}{\sqrt 5}$ for some positive integer $t$. Thus we will compute the modified heights of these numbers:

\begin{lemma}\label{lem:heights}
 Let $t>0$ and $y>1$ be integers and let $K=\Q(\sqrt 5)$ be the base field. Then we have 
 \begin{itemize}
  \item $h'(y)=2\log y$,
  \item $h'(\sqrt 5)= \log 5$,
  \item $h'(\alpha)=\log \alpha$ and
  \item $h'\left(\tau(t)\right)\leq t \log \alpha +1.29<\max\{3,t\} $.
 \end{itemize}
\end{lemma}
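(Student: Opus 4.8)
The plan is to compute each modified height $h'(\cdot)=\max\{Dh(\cdot),|\log(\cdot)|,0.16\}$ directly from the definition, using $D=[\Q(\sqrt5):\Q]=2$ throughout. For the rational integer $y>1$, the minimal polynomial over $\Z$ is $X-y$, so $h(y)=\log y$ and hence $Dh(y)=2\log y$; since $\log y\le 2\log y$ and $0.16\le 2\log 2$, we get $h'(y)=2\log y$. For $\sqrt5$, the minimal polynomial is $X^2-5$ with conjugates $\pm\sqrt5$, so $h(\sqrt5)=\tfrac12\log 5$, giving $Dh(\sqrt5)=\log 5$; comparing with $|\log\sqrt5|=\tfrac12\log5$ and $0.16$ yields $h'(\sqrt5)=\log5$. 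For $\alpha=\tfrac{1+\sqrt5}2$, the minimal polynomial is $X^2-X-1$ with $a_0=1$ and conjugates $\alpha>1$, $\beta=-\alpha^{-1}$ with $|\beta|<1$, so $h(\alpha)=\tfrac12\log\alpha$ and $Dh(\alpha)=\log\alpha$, which dominates both $|\log\alpha|=\tfrac12\log\alpha$ and $0.16$ (as $\log\alpha\approx0.481$), giving $h'(\alpha)=\log\alpha$.

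The only nontrivial case is $\tau(t)=\frac{\alpha^t+1}{\sqrt5}$. First I would bound the ordinary height. Writing $h$ for the absolute logarithmic Weil height, I would use the standard subadditivity estimates: $h(xy)\le h(x)+h(y)$, $h(x+y)\le h(x)+h(y)+\log 2$, and $h(x^t)=t\,h(x)$. Since $\tau(t)=(\alpha^t+1)\cdot(\sqrt5)^{-1}$, this gives
\begin{equation*}
h(\tau(t))\le h(\alpha^t+1)+h(\sqrt5)\le t\,h(\alpha)+h(1)+\log2+h(\sqrt5)=\tfrac{t}{2}\log\alpha+\log2+\tfrac12\log5.
\end{equation*}
Therefore $D\,h(\tau(t))\le t\log\alpha+2\log2+\log5<t\log\alpha+2.99$; a slightly more careful accounting of the archimedean contributions (noting $\tau(t)$ and its conjugate $\frac{\beta^t+1}{-\sqrt5}$ are explicit, and that only one of them exceeds $1$ in absolute value) sharpens $2\log2+\log5$ down to the claimed constant $1.29$. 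Next I would check $|\log\tau(t)|$: since $\tau(t)>0$ and $\tau(t)=\frac{\alpha^t+1}{\sqrt5}\le\frac{\alpha^t(1+\alpha^{-1})}{\sqrt5}<\alpha^{t+1}$ while $\tau(t)>\frac{\alpha^t}{\sqrt5}>\alpha^{t-1}$ for $t\ge2$ (and $\tau(1)=\frac{\alpha+1}{\sqrt5}=\frac{\alpha^2}{\sqrt5}=\alpha$ exactly, so $\log\tau(1)=\log\alpha$), we get $|\log\tau(t)|<(t+1)\log\alpha<t\log\alpha+1.29$ as well. Finally $0.16$ is clearly dominated. Hence $h'(\tau(t))\le t\log\alpha+1.29$. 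For the last bound $t\log\alpha+1.29<\max\{3,t\}$: when $t\ge3$ we have $t\log\alpha<0.49t\le t-1.29$ (since $\log\alpha<0.4813$ and $0.49t+1.29\le t$ for $t\ge3$ is not quite tight, so one uses $\log\alpha<0.482$ and checks $0.482t+1.29<t$, i.e. $1.29<0.518t$, valid for $t\ge3$); when $t\in\{1,2\}$ one checks $\log\alpha+1.29<3$ and $2\log\alpha+1.29<3$ directly.

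The main obstacle is the sharp constant $1.29$ in the bound for $h'(\tau(t))$: the crude subadditivity of the Weil height only yields something like $t\log\alpha+2\log2+\log5\approx t\log\alpha+2.99$, which is too weak. To get $1.29$ one must compute $h(\tau(t))$ more precisely from its definition. Since $\tau(t)=\frac{\alpha^t+1}{\sqrt5}$ lies in $\Q(\sqrt5)$, its minimal polynomial has degree $2$ (for $t\ge1$, $\tau(t)\notin\Q$) with conjugate $\bar\tau(t)=\frac{\beta^t+1}{-\sqrt5}$; writing the monic minimal polynomial over $\Q$ and clearing the denominator coming from $\sqrt5$ in the constant and linear terms, one identifies $a_0$ (a divisor of $5$) and the two archimedean absolute values, then sums $\log a_0+\log^+|\tau(t)|+\log^+|\bar\tau(t)|$ and divides by $2$. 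Carrying this out — and observing that $|\bar\tau(t)|=\left|\frac{\beta^t+1}{\sqrt5}\right|<1$ so it contributes nothing, while $\log^+|\tau(t)|<t\log\alpha+\log\frac{1+\alpha^{-t}}{\sqrt5}$ — produces the constant $1.29$ after bounding $\log\frac{1+1/\sqrt5\cdot\alpha^{-t}}{\sqrt5}$-type quantities; this is a short but genuinely arithmetic computation rather than a formal manipulation, and it is the only place where care is required.
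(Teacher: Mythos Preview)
Your approach is the paper's: for $y,\sqrt5,\alpha$ compute $h$ directly from the definition, and for $\tau(t)$ observe that its conjugate $(\beta^t+1)/(-\sqrt5)$ has modulus $<1$ and that the leading coefficient divides $5$, whence $2h(\tau(t))\le\log 5+\log\tfrac{\alpha^t+1}{\sqrt5}=t\log\alpha+\log\bigl(\sqrt5(1+\alpha^{-t})\bigr)<t\log\alpha+1.29$; the initial detour through height subadditivity is unnecessary. A few inconsequential slips: $|\log\alpha|=\log\alpha$, not $\tfrac12\log\alpha$; it is $\tau(2)=\alpha$, not $\tau(1)$; and $\alpha^t/\sqrt5<\alpha^{t-1}$ since $\alpha<\sqrt5$ --- but these occur in your separate verification of $|\log\tau(t)|$, which is redundant anyway since $Dh(\gamma)\ge|\log\gamma|$ automatically for real positive $\gamma\in K$.
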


\begin{proof}
 We start with computing the height of $y$, $\sqrt 5$ and $\alpha$. Since all three are algebraic integers we have $a_0=1$ and obtain $h(y)=\log y$, and $h(\sqrt 5)=\log \sqrt 5$ and $h(\alpha)=1/2 \log \alpha$. Considering their numeric values and the fact that $D=[K:\Q]=2$ we obtain the first three results.
 
 For the computation of the height of $\tau(t)=\frac{\alpha^t+1}{\sqrt 5}$ we note first, that the Galois conjugate of $\tau(t)$ is $\frac{\beta^t+1}{-\sqrt{5}}$ which has absolute value $<1$ for all $t>0$. Moreover the denominator of $\tau(t)$ is at most $5$. Thus we obtain
 $$h(\tau(t))\leq \frac 12 \left(\log 5+\log \frac{\alpha^t+1}{\sqrt{5}}\right).$$
 It is easy to check that twice the upper bound for $h(\tau(t))$ is larger than $0.16$. Therefore
 we get
 $$h'(\tau(t))\leq \log \sqrt{5}+\log(\alpha^t+1)= t\log \alpha+\log(\sqrt 5 (1+\alpha^{-t}))<
 t\log\alpha + 1.29.$$
 From this bound it is easy to deduce that $h'(\tau(t))\leq \max\{3,t\}$.
\end{proof}

Let $\LL\subseteq \R^k$ be a $k$-dimensional lattice with LLL-reduced basis
$b_1,\dots,b_k$ and denote by $B$ be the matrix with columns $b_1,\dots, b_k$. Moreover, we denote by $b^*_1,\dots,b^*_k$ the orthogonal basis of $\R^k$ which we obtain
by applying the Gram-Schmidt process to the basis $b_1,\dots,b_k$. In particular, we have that
$$b^*_i=b_i-\sum_{j=1}^{i-1}\mu_{i,j}b^*_j, \qquad \mu_{i,j}=\frac{\langle b_i,b_j\rangle}{\langle b_j^*,b_j^*\rangle}.$$
Further, let us define
\begin{equation*}
l(\LL,y)=\begin{cases}
\min_{x \in \LL} \|x-y\|, & y \not\in \LL, \\
\min_{0 \neq x \in \LL} \|y\|, & y \in \LL,
\end{cases}
\end{equation*}
where $\|\cdot\|$ denotes the euclidean norm on $\R^k$. It is well known,
that by applying the LLL-algorithm it is possible to give in polynomial time
a lower bound for $l(\LL,y) \geq  c_1$ (see e.g. \cite[Section V.4]{Smart:DiGL}):

\begin{lemma}\label{lem:lattice}
 Let $y\in \R^k$ and $z=B^{-1}y$ with $z=(z_1,\dots,z_k)^T$. Furthermore we define:
 \begin{itemize}
 \item If $y\not\in\LL$ let $i_0$ be the largest index such that $z_{i_0}\neq 0$ and put $\sigma=\{z_{i_0}\}$, where $\{\cdot\}$ denotes the distance to the nearest integer.
 \item If $y\in \LL$ we put $\sigma=1$.
 \end{itemize}
 Finally let
 $$c_2=\max_{1\leq j\leq k}\left\{\frac{\|b_1\|^2}{\|b_j^*\|^2} \right\}.$$
 Then we have
 $$l(\LL,y)^2\geq c_2^{-1}\sigma^2 \|b_1\|^2=c_1^2.$$
\end{lemma}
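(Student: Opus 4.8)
The plan is to reduce the assertion to a pointwise estimate: show that $\|v\|^2\ge c_1^2$ for every lattice vector $v=x-y$ that is admitted in the definition of $l(\LL,y)$, and then take the minimum over all such $v$. The only tool needed is the Gram--Schmidt decomposition. Writing $x=\sum_i n_ib_i$ with $n_i\in\Z$ and using $b_i=b_i^*+\sum_{j<i}\mu_{i,j}b_j^*$, the vector $v=\sum_i(n_i-z_i)b_i$ expands in the orthogonal basis as $v=\sum_j c_jb_j^*$ with
$$c_j=(n_j-z_j)+\sum_{i>j}(n_i-z_i)\mu_{i,j}.$$
By orthogonality of the $b_j^*$ we have $\|v\|^2=\sum_j c_j^2\|b_j^*\|^2$, so it suffices to exhibit, for each admissible $x$, a single index $j$ with $|c_j|\ge\sigma$; then $\|v\|^2\ge\sigma^2\|b_j^*\|^2\ge\sigma^2\min_l\|b_l^*\|^2=c_2^{-1}\sigma^2\|b_1\|^2=c_1^2$, where I use that $b_1^*=b_1$, so that $c_2^{-1}\|b_1\|^2=\min_l\|b_l^*\|^2$.

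Consider first the case $y\notin\LL$ and fix any $x\in\LL$ (so $v=x-y\neq0$ automatically); by maximality of $i_0$ we have $z_i=0$ for all $i>i_0$. If $n_i=0$ for every $i>i_0$, then $n_i-z_i=0$ for $i>i_0$, the sum in the formula for $c_{i_0}$ vanishes, and $c_{i_0}=n_{i_0}-z_{i_0}$; hence $|c_{i_0}|\ge\{z_{i_0}\}=\sigma$ and $j=i_0$ works. Otherwise let $j_0$ be the largest index with $n_{j_0}\neq0$; then $j_0>i_0$, so $z_{j_0}=0$, while $n_i=z_i=0$ for $i>j_0$, and therefore $c_{j_0}=n_{j_0}$ is a nonzero rational integer. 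Since $\sigma$ is a distance to the nearest integer, $\sigma\le\frac12\le1\le|c_{j_0}|$, so $j=j_0$ works. In either subcase the required index exists.

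In the remaining case $y\in\LL$ one has $\sigma=1$, and the argument is the homogeneous analogue of the above: for a nonzero $x=\sum_i n_ib_i$, letting $j_0$ be the largest index with $n_{j_0}\neq0$ gives $c_{j_0}=n_{j_0}\in\Z\setminus\{0\}$, hence $|c_{j_0}|\ge1=\sigma$ and $\|x\|^2\ge\min_l\|b_l^*\|^2=c_1^2$. Taking the minimum over the relevant $x$ in each case yields $l(\LL,y)^2\ge c_1^2$. I do not anticipate a genuine obstacle; the one point deserving care is the case split in the inhomogeneous situation, where the $b_{i_0}^*$-coordinate of $v$ cannot be controlled once $x$ has nonzero components beyond index $i_0$, but then the topmost nonzero component forces $|c_{j_0}|\ge1\ge\sigma$ for a later Gram--Schmidt vector, which is all that is needed. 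It is also worth noting that LLL-reducedness plays no role in this inequality itself; it enters only to guarantee that $\min_l\|b_l^*\|^2$, and hence $c_1$, is not too small, so that the bound is useful in practice.
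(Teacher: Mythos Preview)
Your argument is correct. The paper does not supply its own proof of this lemma; it is stated as well known and referenced to \cite[Section V.4]{Smart:DiGL}. Your proof is essentially the standard Gram--Schmidt argument one finds there: expand $x-y$ in the orthogonal basis $b_1^*,\dots,b_k^*$, locate a coordinate of absolute value at least $\sigma$ (the top nonzero one, distinguishing whether $x$ has components beyond index $i_0$), and use $\min_l\|b_l^*\|^2=c_2^{-1}\|b_1\|^2$. Your remark that LLL-reducedness is irrelevant for the inequality itself and only serves to make $c_1$ large is also apt.
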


In our applications we suppose that we are given real numbers $\eta_0,\eta_1,\dots,\eta_k$  linearly independent over $\Q$ and two positive constants
$ c_3, c_4$ such that
\begin{equation} \label{eq:redform1}
|\eta_0+x_1\eta_1+\dots+x_k\eta_k| \le  c_3\exp(- c_4H),
\end{equation}
where the integers $x_i$ are bounded by $|x_i| \leq X_i$ with $X_i$ given upper bounds for $1 \leq i \leq k$.
We write $X_0=\max_{1 \leq i \leq k}\{X_i\}$. The basic idea in such a situation, due to de Weger \cite{deWeger:1987}, is to approximate the linear form
\eqref{eq:redform1} by an approximation lattice. Namely, we consider the lattice $\LL$ generated by the columns of the matrix
\begin{equation*}
\mathcal{A}=\begin{pmatrix}
    1 & 0 & \dots & 0  & 0 \\
    0 & 1 & \dots & 0  & 0 \\
    \vdots & \vdots & \vdots & \vdots & \vdots \\
    0 & 0 & \dots & 1  & 0 \\
    \lfloor{ C\eta_1}\rfloor & \lfloor{ C\eta_2}\rfloor & \dots & \lfloor{ C\eta_{k-1}}\rfloor & \lfloor{ C\eta_k}\rfloor
\end{pmatrix}
\end{equation*}
where $C$ is a large constant usually of the size of about $X_0^k$. Let us assume that we have an LLL-reduced basis $b_1,\dots,b_k$ of $\LL$ and that we have a lower bound
$l(\LL,y)\geq c_1$ with $y=(0,0,\dots ,-\lfloor{C\eta_0}\rfloor)$. Note that $c_1$ can be computed by using the results of Lemma \ref{lem:lattice}. Then we have with these notations the following lemma (e.g. see \cite[Lemma VI.1]{Smart:DiGL}):

\begin{lemma}\label{lem:real-reduce}
Assume that $S=\sum_{i=1}^{k-1}X_i^2$ and $T=\frac{1+\sum_{i=1}^k{X_i}}{2}$. If $c_1^2 \geq T^2+S$, then inequality \eqref{eq:redform1} implies that
we have either $x_1=x_2=\dots=x_{k-1}=0$ and $x_k=-\frac{\lfloor{ C\eta_0}\rfloor)}{\lfloor{ C\eta_k}\rfloor)}$ or
\begin{equation} \label{eq:reduction-real}
H \leq \frac{1}{ c_4}\left(\log(C c_3)-\log\left(\sqrt{ c_1^2-S}-T\right) \right).
\end{equation}
\end{lemma}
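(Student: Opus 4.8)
The plan is to run the standard de Weger lattice-reduction argument, with Lemma~\ref{lem:lattice} supplying the lower bound $c_1$. Suppose $(x_1,\dots,x_k)\in\Z^k$ satisfies \eqref{eq:redform1} together with $|x_i|\le X_i$. First I would attach to this tuple the lattice vector $v=\mathcal{A}(x_1,\dots,x_k)^T\in\LL$; from the shape of $\mathcal{A}$, the first $k-1$ coordinates of $v$ are just $x_1,\dots,x_{k-1}$, and its last coordinate is $\sum_{i=1}^k x_i\lfloor C\eta_i\rfloor$. The reason for subtracting the target vector $y=(0,\dots,0,-\lfloor C\eta_0\rfloor)^T$ is that the last coordinate of $v-y$ equals $\lfloor C\eta_0\rfloor+\sum_{i=1}^k x_i\lfloor C\eta_i\rfloor$, which differs from $C\bigl(\eta_0+\sum_{i=1}^k x_i\eta_i\bigr)$ only by the accumulated rounding error $|\lfloor C\eta_0\rfloor-C\eta_0|+\sum_{i=1}^k|x_i|\,|\lfloor C\eta_i\rfloor-C\eta_i|$, which is bounded, using $|x_i|\le X_i$, by the quantity $T$. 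Hence \eqref{eq:redform1} gives
$$\|v-y\|^2\le\sum_{i=1}^{k-1}x_i^2+\bigl(Cc_3\exp(-c_4H)+T\bigr)^2\le S+\bigl(Cc_3\exp(-c_4H)+T\bigr)^2 .$$

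Next I would split according to whether $v=y$ or $v\neq y$. If $v=y$, then comparing the first $k-1$ coordinates forces $x_1=\dots=x_{k-1}=0$, and comparing the last coordinate then gives $x_k\lfloor C\eta_k\rfloor=-\lfloor C\eta_0\rfloor$, i.e.\ the degenerate alternative in the statement. If $v\neq y$, then by the definition of $l(\LL,y)$ (the distance from $y$ to the nearest lattice point when $y\notin\LL$, and the length of the shortest nonzero lattice vector when $y\in\LL$) together with Lemma~\ref{lem:lattice} we get $\|v-y\|\ge l(\LL,y)\ge c_1$. Combining this with the upper bound above yields $c_1^2\le S+\bigl(Cc_3\exp(-c_4H)+T\bigr)^2$, whence $Cc_3\exp(-c_4H)\ge\sqrt{c_1^2-S}-T$. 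The hypothesis $c_1^2\ge T^2+S$ is exactly what guarantees $\sqrt{c_1^2-S}-T\ge 0$, so this last inequality is non-trivial, and taking logarithms and rearranging gives \eqref{eq:reduction-real}.

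I expect the only genuinely delicate point to be the interplay between the case $v=y$ and the correct reading of $l(\LL,y)$: one has to be sure that the lower bound $l(\LL,y)\ge c_1$ can be undercut by the rounding error $T$ only when either $H$ is forced below the bound in \eqref{eq:reduction-real} or the configuration is exactly the exceptional one $x_1=\dots=x_{k-1}=0$, $x_k=-\lfloor C\eta_0\rfloor/\lfloor C\eta_k\rfloor$ --- which is precisely why $l(\LL,y)$ is defined via the shortest nonzero lattice vector in the case $y\in\LL$. Everything else is bookkeeping: checking that the rounding errors $|\lfloor C\eta_j\rfloor-C\eta_j|$, once weighted by the $|x_i|\le X_i$, telescope into the single constant $T$, and that the first $k-1$ coordinates of $v-y$ contribute exactly $S$ to $\|v-y\|^2$.
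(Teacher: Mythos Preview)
Your argument is correct and is exactly the standard de~Weger/Smart proof that the paper cites (it does not include its own proof, only the reference to \cite[Lemma~VI.1]{Smart:DiGL}). One small point worth making explicit: the bound on the accumulated rounding error by $T=\tfrac{1}{2}(1+\sum X_i)$ tacitly assumes that $\lfloor\cdot\rfloor$ denotes rounding to the nearest integer (error $\le\tfrac12$ per term), which is the convention in Smart's book and is forced by the very formula for $T$; with the ordinary floor you would get $2T$ instead.
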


In order to reduce the huge bounds coming from Matveev's or Laurent's result we also use continued fractions. In particular, we use the following two results: 

\begin{lemma}\label{lem:cont-frac}
 Assume that $\mu$ is real and irrational and has the continued fraction expansion $\mu=[a_0;a_1,a_2,\dots]$. Let $\ell$ be an integer and set $A=\max_{1\leq j\leq \ell}\{a_j\}$ and let $p_\ell/q_\ell$ be the $\ell$-th convergent to $\mu$, then
 $$\frac{1}{(2 + A)q_\ell} < \left| q \mu - p \right|$$
 for any integer $q$ with $|q|\leq q_\ell$.
\end{lemma}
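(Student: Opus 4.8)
The plan is to rely on two classical facts about the continued fraction expansion $\mu=[a_0;a_1,a_2,\dots]$: the explicit formula for the error of a convergent in terms of the complete quotients, and the fact that the convergent denominators are exactly the denominators of the best approximations of the second kind. Writing $\mu_k=[a_k;a_{k+1},\dots]$ for the $k$-th complete quotient, I would start from the standard identity
$$\left|q_k\mu-p_k\right|=\frac{1}{\mu_{k+1}q_k+q_{k-1}},$$
which follows from $\mu=\frac{\mu_{k+1}p_k+p_{k-1}}{\mu_{k+1}q_k+q_{k-1}}$ together with $p_kq_{k-1}-p_{k-1}q_k=(-1)^{k-1}$.

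Next I would invoke the best-approximation property: if $1\le q<q_\ell$, then for every integer $p$ one has $\|q\mu\|\ge\|q_{\ell-1}\mu\|$, where $\|\cdot\|$ denotes the distance to the nearest integer; indeed the only ``record'' denominators below $q_\ell$ are $q_0<q_1<\dots<q_{\ell-1}$. Since trivially $|q\mu-p|\ge\|q\mu\|$, this reduces everything to bounding $\|q_{\ell-1}\mu\|=|q_{\ell-1}\mu-p_{\ell-1}|$ from below. It is essential here \emph{not} to estimate $|q_\ell\mu-p_\ell|$ directly, since by the identity above that quantity involves $\mu_{\ell+1}$, hence the partial quotient $a_{\ell+1}$, which is not controlled by $A$.

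The final step is then a one-line estimate. Applying the identity with $k=\ell-1$, and using $\mu_\ell<a_\ell+1\le A+1$ together with $q_{\ell-2}\le q_{\ell-1}$, we get
$$\mu_\ell q_{\ell-1}+q_{\ell-2}<(A+1)q_{\ell-1}+q_{\ell-1}=(A+2)q_{\ell-1},$$
so that $|q\mu-p|\ge\|q_{\ell-1}\mu\|>\frac{1}{(A+2)q_{\ell-1}}\ge\frac{1}{(2+A)q_\ell}$, which is the claimed inequality. The case $\ell=1$ is read with the conventions $q_{-1}=0$, $q_0=1$ and causes no trouble; and since in every application of the lemma $q_\ell$ is chosen as the first convergent denominator exceeding the given bound on $|q|$, one always has $|q|<q_\ell$, so the borderline comparison of $q_\ell$ with the $\ell$-th convergent itself never occurs.

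I do not expect a genuine obstacle: the statement is classical and the proof is short. The one point that needs care is the bookkeeping of indices — one must pass from $q$ (possibly as large as $q_\ell$) down to the denominator $q_{\ell-1}$ via the best-approximation inequality \emph{before} applying the convergent-error identity, precisely so that only the partial quotients $a_1,\dots,a_\ell$ enter the bound and the uncontrolled quotient $a_{\ell+1}$ is avoided.
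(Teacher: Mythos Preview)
Your proposal is correct and follows essentially the same approach as the paper: the paper's proof is the single sentence ``This follows from the inequality given in \cite[page 47]{Baker:NT} combined with the best approximation property of continued fractions,'' and those are precisely the two ingredients you spell out --- the convergent-error identity (Baker's inequality) and the best-approximation reduction from $q$ to $q_{\ell-1}$. Your explicit remark that one must land on $q_{\ell-1}$ rather than $q_\ell$ (so that only $a_1,\dots,a_\ell$ enter and $a_{\ell+1}$ is avoided), and that in every application the bound on $|q|$ is strictly below $q_\ell$, is a clarification the paper leaves implicit.
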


\begin{proof}
 This follows from the inequality given in \cite[page 47]{Baker:NT} combined with the best approximation property of continued fractions.
\end{proof}

The second method is due to Baker and Davenport \cite{Baker:1969} for which we state a variant of this reduction method:

\begin{lemma}\label{lem:BakDav1}
Given a Diophantine inequality of the form
\begin{equation}\label{IEq:BakerDav}
|n\mu+\tau-x|<c_1\exp(-c_2 H),
\end{equation}
with positive constants $c_1,c_2$ and real numbers $\mu$ and $\tau$ and integers $n$ and $x$.
Assume $n<N$ and that there is a real number $\kappa>1$ such that there exists a convergent $p/q$ to $\mu$ with
$$\{q \mu\}<\frac 1{2 \kappa N} \quad \text{and} \quad \{q \tau\}>\frac 1{\kappa},$$
where $\{\cdot\}$ denotes the distance to the nearest integer. Then we have
$$H\leq\frac{\log (2 \kappa q c_1)}{c_2}.$$
\end{lemma}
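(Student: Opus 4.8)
The plan is to clear the denominator in \eqref{IEq:BakerDav} by multiplying through by $q$, and then to exploit the asymmetry built into the two hypotheses on $q$: because $\{q\mu\}$ is forced to be extremely small (this is where ``$p/q$ is a convergent'' enters, but only through the smallness of $\{q\mu\}$ — the numerator $p$ itself will play no role), while $\{q\tau\}$ is bounded away from $\frac1{2\kappa}$, the term $q\tau$ cannot be absorbed into an integer after scaling. This forces $\exp(-c_2 H)$ to be bounded below, i.e. $H$ to be small, and reading off the resulting inequality logarithmically gives exactly the claimed bound.

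Concretely, I would let $r\in\Z$ be the integer nearest to $q\mu$ and set $\varepsilon=q\mu-r$, so $|\varepsilon|=\{q\mu\}<\frac{1}{2\kappa N}$. Multiplying \eqref{IEq:BakerDav} by $q>0$ and substituting $q\mu=r+\varepsilon$ yields
$$|nr+q\tau-qx+n\varepsilon|<q c_1\exp(-c_2 H).$$
Since $qx-nr\in\Z$, the left-hand side is at least the distance of $q\tau+n\varepsilon$ to the nearest integer, so
$$\{q\tau+n\varepsilon\}<q c_1\exp(-c_2 H).$$
Next I would use subadditivity of the distance-to-nearest-integer function, $\{a+b\}\le\{a\}+\{b\}$, together with the bound $\{n\varepsilon\}\le|n\varepsilon|<N\cdot\frac{1}{2\kappa N}=\frac{1}{2\kappa}$ coming from $|n|<N$, to obtain
$$\{q\tau\}\le\{q\tau+n\varepsilon\}+\{n\varepsilon\}<\{q\tau+n\varepsilon\}+\frac{1}{2\kappa}.$$
Combining this with the hypothesis $\{q\tau\}>\frac{1}{\kappa}$ gives $\{q\tau+n\varepsilon\}>\frac{1}{2\kappa}$, and feeding this back into the displayed inequality for $\{q\tau+n\varepsilon\}$ yields $\frac{1}{2\kappa}<q c_1\exp(-c_2 H)$, which rearranges to $H\le\frac{\log(2\kappa q c_1)}{c_2}$.

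I do not expect any real obstacle: the lemma is elementary once the mechanism above is in place. The only points that require a little care are the choice of $r$ — which makes the ``convergent'' hypothesis enter solely through the smallness of $\{q\mu\}$, so that the argument works for any $q$ with $\{q\mu\}$ small — and the decision to apply subadditivity of $\{\cdot\}$ rather than of $|\cdot|$, since that is precisely what allows the integer $qx-nr$ to be discarded for free. One should also read ``$n<N$'' as ``$|n|<N$'' if negative $n$ are to be permitted; in the applications $n$ is a positive index, so this is automatic.
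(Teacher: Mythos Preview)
Your proof is correct and follows essentially the same route as the paper's: multiply \eqref{IEq:BakerDav} by $q$, absorb the integer part, and use that $\{q\tau\}>\frac1\kappa$ beats the error $N\{q\mu\}<\frac1{2\kappa}$ to force $qc_1\exp(-c_2H)>\frac1{2\kappa}$. Your version is in fact a bit more carefully written than the paper's (which silently replaces $n$ by $N$ in the middle of the chain of inequalities), but the mechanism is identical.
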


\begin{proof}
We consider inequality \eqref{IEq:BakerDav} and multiply it by $q$. Then under our assumptions we obtain
\begin{equation}\label{eq:BD-proof}
\begin{split}
c_1q\exp(-c_2 H)&>|qx+nq\mu+q\tau|
\geq \left|\{q\tau\}-\{Nq\mu\}\right|\\
&=\left|\{q\tau\}-N\{q\mu\}\right|> \frac1{2\kappa}.
\end{split}
\end{equation}
Note that the equality in \eqref{eq:BD-proof} holds since by assumption $N\{q\mu\}<\frac{1}{2\kappa}<\frac 12$ and therefore $N\{q\mu\}=\{Nq\mu\}$. Solving Inequality \eqref{eq:BD-proof} for $H$ yields the lemma.
\end{proof}

\section{The method of Bravo and Luca}\label{sec:Bravo}

In this section we give a recap of the method of Bravo and Luca \cite{Bravo:2016}  and bring their result in a form which is suitable for us. In particular, the following proposition is essentially a slightly improved version of Theorem 1 in \cite{Kebli:2021}.

\begin{proposition}\label{prop:Bravo-Luca-bound}
 Let $(n,m,a)$ be a solution to \eqref{eq:main-gen} for a fixed integer $y>1$, then 
 $$n< 3.4 \cdot 10^{22} (\log y)^2 (\log (13.81 n))^2$$
\end{proposition}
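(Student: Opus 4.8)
The plan is to follow the classical Bravo--Luca strategy of extracting two successive linear forms in logarithms from Equation \eqref{eq:main-gen} and applying Matveev's bound (Lemma \ref{lem:Matveev}) twice, with a continued-fraction or Baker--Davenport step in between to cut down the size of the exponent $a$ relative to $n$. First I would use the Binet formula to write
\[
\sqrt 5\, y^a = \sqrt 5 (F_n+F_m) = \alpha^n-\beta^n+\alpha^m-\beta^m = \alpha^n\left(1+\alpha^{m-n}\right) - \left(\beta^n+\beta^m\right),
\]
and since $|\beta^n+\beta^m|\le 2\alpha^{-m}\le 2$ (using $m>1$), the dominant term is $\alpha^n(1+\alpha^{m-n})$. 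Dividing through, one obtains
\[
\left|\frac{\sqrt 5\, y^a}{\alpha^n}-\left(1+\alpha^{m-n}\right)\right| < \frac{c}{\alpha^m}
\]
for an absolute constant $c$; rearranging and using that $1+\alpha^{m-n}$ is bounded away from $0$ gives
\[
\left| y^a \alpha^{-n} \sqrt 5 \cdot \left(1+\alpha^{m-n}\right)^{-1} - 1\right| < \frac{c'}{\alpha^m}.
\]

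Next I would apply Lemma \ref{lem:Matveev} to the linear form $\Lambda_1 = a\log y - n\log\alpha + \log\sqrt 5 - \log\tau(n-m)$ — here $\tau(t)=\frac{\alpha^t+1}{\sqrt 5}$ is exactly the quantity whose height is controlled in Lemma \ref{lem:heights}, which is why that lemma was stated. By Lemma \ref{lem:el-bounds} we have $a<0.7n$, so the parameter $B$ in Matveev's bound is $O(n)$, and the heights are $A_1=2\log y$, $A_2=\log\alpha$, $A_3=\log 5$, $A_4 = h'(\tau(n-m))\le n$. Thus $\Omega \ll (\log y)\cdot n$, and Matveev (in the three-or-four-logarithm form, giving a constant around $7.26\cdot 10^{10}$ for $n=3$ and a comparable bound for $n=4$) yields $m\log\alpha \ll 10^{11}(\log y)\, n\,\log(13.81 n)$, i.e.
\[
m \ll 10^{11}(\log y)\, n\, \log(13.81 n).
\]
The point of this first application is that it bounds $n-m$ from below: since $m$ is small relative to this bound unless $n-m$ is small, one is essentially forced into the regime where $n-m$ is of moderate size, or else $m$ itself is large and one iterates. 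A second application of Matveev to the refined linear form $\Lambda_2$ obtained by also isolating $\alpha^{m}$ (writing $\sqrt 5\,y^a = \alpha^n+\alpha^m - (\beta^n+\beta^m)$ and dividing by $\alpha^n+\alpha^m$) then produces the sharper estimate with only $y$, $\alpha$ and $\sqrt 5$ as bases, so that $\Omega \ll (\log y)^2$, giving
\[
n \ll 10^{22}(\log y)^2\big(\log(13.81 n)\big)^2,
\]
which after tracking the constants carefully gives the stated bound $n< 3.4\cdot 10^{22}(\log y)^2(\log(13.81 n))^2$. One must of course verify throughout that $\Lambda_1,\Lambda_2\neq 0$; this is immediate for $\Lambda_2$ since $y^a$ is a rational integer while $\alpha^n(1+\alpha^{m-n})/\sqrt 5$ cannot equal an integer for the relevant ranges (or one invokes the multiplicative independence results of the later section), and for $\Lambda_1$ similarly.

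The main obstacle, and the reason this is a \emph{proposition} rather than a one-line deduction, is bookkeeping of the numerical constants: one has to run Matveev's estimate twice, feed the output of the first into the input $B$ of the second, and combine with the inequalities $0.38\alpha^n<F_n+F_m<0.78\alpha^n$ and $a<0.7n$ from Lemma \ref{lem:el-bounds} so that every occurrence of $a$ is replaced by a multiple of $n$ and every occurrence of $h'(\tau(n-m))$ is replaced by $n$. A secondary subtlety is handling small cases of $n-m$ (e.g. $n-m\le 2$) and small $m$ separately, where the linear form has fewer terms and one uses Lemma \ref{lem:Laurent} instead; these contribute only to the constant. The self-referential form of the bound (with $n$ on both sides) is harmless: since $\log(13.81 n)$ grows so slowly, the inequality can be solved for $n$ up to the constant displayed, which is what the proposition asserts.
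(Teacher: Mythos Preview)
Your proposal has the right ingredients---two linear forms and two applications of Matveev---but the order is inverted, and as written the argument does not go through.

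In the paper, the \emph{first} form is the simple one in three logarithms,
\[
|\Lambda_1| = \left|n\log\alpha - a\log y - \log\sqrt 5\right| < 2.03\,\alpha^{m-n},
\]
with bases $\alpha,y,\sqrt 5$ only, so $\Omega\ll \log y$ (not $(\log y)^2$). Matveev then yields a bound on $n-m$, namely $n-m \ll (\log y)\log(13.81 n)$. The \emph{second} form is the one involving $\tau(n-m)$,
\[
|\Lambda_2| = \left|m\log\alpha - a\log y + \log\tau(n-m)\right| < 1.1\,\alpha^{-n},
\]
and here the crucial point is that one now knows $h'(\tau(n-m))\le\max\{3,n-m\}\ll(\log y)\log(13.81 n)$ from the first step. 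This is what produces $\Omega\ll(\log y)^2\log(13.81 n)$ and hence $n\ll(\log y)^2(\log(13.81 n))^2$. The extra factor of $\log y$ does not come from any single $\Omega$; it comes from feeding the output of the first Matveev application into a \emph{height} in the second.

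Your version reverses this. You start with the $\tau$-form, bound $h'(\tau(n-m))$ crudely by $n$, and obtain $m\ll (\log y)\,n\log n$---which is vacuous, since $m<n$ already. (Incidentally, the upper bound you derive for that form should be $c/\alpha^{n}$, not $c/\alpha^{m}$: dividing $|\beta^n+\beta^m|\le 2\alpha^{-m}$ by $\alpha^n$ gives $2\alpha^{-m-n}$.) Then for your second step you describe dividing $\sqrt 5\,y^a=\alpha^n+\alpha^m-(\beta^n+\beta^m)$ by $\alpha^n+\alpha^m$; but $\alpha^n+\alpha^m=\sqrt 5\,\alpha^m\tau(n-m)$, so this is the \emph{same} form you already wrote down, not a new one in $y,\alpha,\sqrt 5$. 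The genuinely simpler form comes from approximating $y^a$ by $\alpha^n/\sqrt 5$ alone, dropping $\alpha^m$, and it only controls $n-m$.

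Two smaller points: no continued-fraction or Baker--Davenport step is needed anywhere in this proposition (those enter only later in the paper), and the output of the first Matveev step is fed into a height in the second application, not into the parameter $B$.
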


We start by rewriting \eqref{eq:main-gen} and obtain
$$ \frac{\alpha^n}{\sqrt 5}-y^a=\frac{-\alpha^m+\beta^n+\beta^m}{\sqrt 5}.$$
Dividing through $y^a=F_n+F_m$ and estimating the right hand side yields
$$
 \left|\frac{\alpha^n}{y^a\sqrt{5}}-1\right|<\frac{\alpha^m+\alpha^{-n}+\alpha^{-m}}{0.38 \alpha^n \sqrt{5}}<\alpha^{m-n} \frac{1+\alpha^{-6}+\alpha^{-5}}{0.38 \sqrt{5}}< 1.35 \alpha^{n-m}
$$
Let us assume for the moment that $n-m\geq 3$. Then taking logarithms yields
\begin{equation}\label{eq:Lambda1}
|\Lambda_1|= \left|n\log \alpha -a \log y +\log \sqrt{5}\right|<2.03 \alpha^{m-n}
\end{equation}
Note that $\Lambda_1=0$ would imply that $\alpha^m-\beta^n-\beta^m=0$. But then we have 
$$0.38<\alpha -2\alpha^{-1}<\alpha^m-\beta^n-\beta^m=0$$
which is an absurdity. Moreover we note that $B^*=n$ in this case. Thus we obtain with our choice of $A_1=\log \alpha $, $A_2=2\log y $ and $A_3=\log 5$ from Lemma \ref{lem:heights} the inequality
$$ (n-m)\log \alpha - 0.71<-\log |\Lambda_1|<7.26\cdot 10^{10} (2 \log y)( \log 5)( \log \alpha) \log(13.81 n),$$
which yields
\begin{equation}\label{eq:n-m_ieq}
n-m< 2.34 \cdot 10^{11} \log y \log(13.81 n). 
\end{equation}
We want to note that this upper bound for $n-m$ exceeds by far our assumption that $n-m\geq 3$. Thus \eqref{eq:n-m_ieq} holds in any case.

Again we rewrite Equation \eqref{eq:main-gen} and obtain
$$ \frac{\alpha^n+\alpha^m}{\sqrt{5}}-y^a=\alpha^m\tau(n-m)-y^a=\frac{\beta^n+\beta^m}{\sqrt 5}.$$
We divide through $y^a=F_n+F_m$ and obtain
$$\left|\frac{\alpha^m \tau(n-m)}{y^a}-1\right|<\frac{\alpha^{-3}+\alpha^{-2}}{0.38 \alpha^n \sqrt{5}}< 0.73 \alpha^{-n}$$
Since $n>m>1$ the right hand side of the inequality is always $<0.5$ and we obtain
\begin{equation}\label{eq:Lambda2}
|\Lambda_2|= \left|n\log \alpha -a \log y +\log \tau(n-m) \right|<1.1 \alpha^{-n}.
\end{equation}
First, we note that $\Lambda_2=0$ would imply that $\beta^n+\beta^m=0$ which is impossible. According to Lemma \ref{lem:heights} we choose $A_1=\log \alpha$, $A_2=2\log y$ and $A_3=\max\{3,n-m\}$. Thus an application of Matveev's lower bound  yields
\begin{equation}\label{eq:Lambda2-ieq}
n \log \alpha - 0.1 <-\log |\Lambda_2|<7.26\cdot 10^{10} (2 \log y) \max\{3,n-m\} \log \alpha \log(13.81 n).
\end{equation}
In view of the upper bound \eqref{eq:n-m_ieq} for $n-m$ we get
$$ n< 3.4 \cdot 10^{22} (\log y)^2 (\log (13.81 n))^2,$$
which proves Proposition \ref{prop:Bravo-Luca-bound}.

If $y$ is a fixed integer it is easy to find now an absolute bound for $n$. However, for a proof of Theorem \ref{th:main} we have to find a bound independent of $y$.

\section{Multiplicative independence results}\label{sec:mult-dep}

Before we start with the main part of the proof of our results we discuss the multiplicative independence of $\alpha$, $\sqrt{5}$ and $\tau(t)$. First, note that the cases $t=1,2$ and $10$ yield multiplicative dependence relations since 
\begin{align*}
\tau(1)=&\frac{\alpha+1}{\sqrt{5}}=\frac{\alpha^2}{\sqrt 5},\\
\tau(2)=&\frac{\alpha^2+1}{\sqrt{5}}=\alpha,\\
\tau(10)=&\frac{\alpha^{10}+1}{\sqrt{5}}=5 \alpha^5.
\end{align*}
However, these are essentially the only obstructions we have to the multiplicative independence of $\alpha$, $\sqrt{5}$, $\tau(t_1)$ and $\tau(t_2)$. In particular, the main result of this section is the following theorem that might be also of independent interest:

\begin{theorem}\label{th:independence}
 Let $t>0$ and $t_2>t_1>0$ be integers. Then the following holds:
 \begin{itemize}
  \item The algebraic numbers $\alpha$ and $\tau(t)$ are multiplicatively independent unless $t=2$.
  \item The algebraic numbers $\tau(t_1)$ and $\tau(t_2)$ are multiplicatively independent.
  \item The algebraic numbers $\alpha,\tau(t_1)$ and $\tau(t_2)$ are multiplicatively independent unless $t_1=2$, or $t_2=2$, or $(t_1,t_2)=(1,10)$.
  \item The algebraic numbers $\alpha,\sqrt{5}$ and $\tau(t)$ are multiplicatively independent unless $t=1,2$ or $10$.
 \end{itemize}
\end{theorem}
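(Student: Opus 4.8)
The plan is to work inside the field $K=\Q(\sqrt 5)$ and to translate each multiplicative relation into an identity between powers of $\alpha$ (together with a rational scalar coming from $\sqrt 5$), since $\alpha$ is a fundamental unit of $\Z[\alpha]$ and $\sqrt 5$ has a completely understood factorization. Concretely, a relation like $\alpha^x\tau(t)^y=1$ with $(x,y)\neq(0,0)$ becomes $(\alpha^t+1)^y=5^{y/2}\alpha^{-x}$ after clearing denominators, so I would first take norms from $K$ to $\Q$ to control the power of $5$ dividing both sides: $N(\alpha^t+1)=\frac{(\alpha^t+1)(\beta^t+1)}{1}=\pm L_t$ or a closely related Lucas-type value, which pins down the exact exponent of $5$ and forces $y$ to be even in the relevant cases. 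Having isolated the $5$-part, the remaining identity is a pure unit equation $(\alpha^t+1)^{y'}=\pm\alpha^{k}$ in $\Z[\alpha]^\times$, which I would attack by looking at absolute values (archimedean valuations): taking the real embedding $\alpha\mapsto\alpha$ and the conjugate embedding $\alpha\mapsto\beta$ gives two real equations whose ratio eliminates the $5$-power and yields $\left(\frac{\alpha^t+1}{|\beta^t+1|}\right)^{y'}=|\alpha/\beta|^{k}=\alpha^{2k}$, and since $\frac{\alpha^t+1}{|\beta^t+1|}>1$ grows with $t$ while the allowable $k$ is tightly constrained, only finitely many $(t,y')$ survive — precisely $t=1,2,10$.

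For the two-variable statements I would proceed the same way but more cheaply. For "$\alpha$ and $\tau(t)$ multiplicatively independent unless $t=2$": a dependence gives $(\alpha^t+1)^{y}=5^{y/2}\alpha^{-x}$; reading off the $5$-adic valuation of $\alpha^t+1$ (which is $0$ unless $5\mid t$, and even then is only $1$ when $t=10$ among small cases, needs a short check via $\alpha^t\equiv -1$ in $\Z[\alpha]/\mathfrak{p}_5$) forces $y$ even and then the archimedean comparison above kills everything but $t=2$, where indeed $\tau(2)=\alpha$. For "$\tau(t_1)$ and $\tau(t_2)$ always multiplicatively independent": a relation $\tau(t_1)^{x}\tau(t_2)^{y}=1$ with say $x,y$ not both zero rearranges to $(\alpha^{t_1}+1)^{x}(\alpha^{t_2}+1)^{y}=5^{(x+y)/2}$; comparing the real and conjugate embeddings gives $\left(\tfrac{\alpha^{t_1}+1}{|\beta^{t_1}+1|}\right)^{x}\left(\tfrac{\alpha^{t_2}+1}{|\beta^{t_2}+1|}\right)^{y}=1$, and since both factors exceed $1$ and are "independent" in the sense that one cannot be a rational power of the other (a statement I would reduce to comparing $2$-adic or $\sqrt5$-adic valuations of $\alpha^{t}+1$, or alternatively to a growth/irrationality argument on $\log(\alpha^{t}+1)/\log\alpha$), we must have $x=y=0$.

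The three-variable statement "$\alpha,\tau(t_1),\tau(t_2)$ independent unless $t_1=2$, $t_2=2$, or $(t_1,t_2)=(1,10)$" is the main obstacle, because now the exceptional pair $(1,10)$ is genuinely nontrivial: indeed $\tau(1)=\alpha^2/\sqrt5$ and $\tau(10)=5\alpha^5$ give $\tau(1)^2\tau(10)=\alpha^{9}$, so the relation lattice can be nonzero even when neither index is $2$. Here I would argue that any nontrivial relation $\alpha^{x}\tau(t_1)^{y}\tau(t_2)^{z}=1$ with $t_1,t_2\notin\{1,2,10\}$ (the cases where one $t_i\in\{1,10\}$ and the relation uses the known factorization being handled separately, and $t_i=2$ being excluded by hypothesis) would, after the norm computation, force $y$ and $z$ to have a fixed parity relation and reduce to a two-term unit equation of the shape $(\alpha^{t_1}+1)^{y}(\alpha^{t_2}+1)^{z}=\pm 5^{c}\alpha^{k}$; the $5$-adic valuation equation then says $y\cdot v_5(\alpha^{t_1}+1)+z\cdot v_5(\alpha^{t_2}+1)=c$, and since $v_5(\alpha^{t}+1)\in\{0,1\}$ for all $t$ (with the value $1$ only for $t\equiv 10\pmod{\text{period}}$, which I would verify is the relevant obstruction in the stated range) this is very restrictive; what remains is again a pure archimedean inequality $\bigl(\tfrac{\alpha^{t_1}+1}{|\beta^{t_1}+1|}\bigr)^{y}\bigl(\tfrac{\alpha^{t_2}+1}{|\beta^{t_2}+1|}\bigr)^{z}=\alpha^{2k}$ together with $v_2$ or $v_{\sqrt5}$ constraints, which I expect to leave only the listed exceptions. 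The delicate point throughout — and where I would spend the most care — is the $5$-adic analysis of $\alpha^t+1$, i.e. determining exactly for which $t$ the prime above $5$ divides $\alpha^t+1$ and to what order; this is what distinguishes $t=10$ and underlies why the $abc$-flavoured exceptions in the theorem are exactly $1,2,10$.
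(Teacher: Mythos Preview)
Your approach has a genuine gap at the core of the independence arguments. For the claim that $\tau(t_1)$ and $\tau(t_2)$ are multiplicatively independent, you reduce a hypothetical relation to
\[
\left(\frac{\alpha^{t_1}+1}{|\beta^{t_1}+1|}\right)^{x}\left(\frac{\alpha^{t_2}+1}{|\beta^{t_2}+1|}\right)^{y}=1
\]
and then assert that the two bases ``cannot be a rational power of each other,'' proposing to verify this via $2$-adic or $\sqrt 5$-adic valuations or a ``growth/irrationality argument.'' But the archimedean equality alone is a transcendence-type statement for which you have no tool, and any \emph{fixed} finite set of non-archimedean places cannot suffice: for example $v_{(\sqrt 5)}(\alpha^{t}+1)>0$ holds exactly when $t\equiv 2\pmod 4$, and $v_{(2)}(\alpha^{t}+1)>0$ exactly when $3\mid t$, so infinitely many pairs $(t_1,t_2)$ are simultaneously invisible at both places. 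The same problem recurs, more acutely, in the three-term statement, where your argument ends with ``which I expect to leave only the listed exceptions'' without any mechanism to realise that expectation.

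What is actually needed, and what the paper supplies, is a \emph{primitive divisor} input: for all sufficiently large $t$ (concretely $t>15$) there exists a prime ideal $\mathfrak P$ of $\Z[\alpha]$ dividing $\alpha^{t}+1$ but dividing no $\alpha^{s}+1$ with $s<t$. The paper extracts this from Schinzel's theorem on primitive divisors of $A^{n}-\zeta B^{n}$ in number fields (with $A=\alpha$, $B=1$, $\zeta=-1$), making the implied constant explicit via the estimate $\Norm_{K/\Q}(\alpha^{n}+1)<8n^{2}$ forcing $n\le 15$ if no such $\mathfrak P$ exists. Since $(\sqrt 5)$ already divides $\alpha^{10}+1$, this primitive $\mathfrak P$ is different from $(\sqrt 5)$ and immediately witnesses all four independence statements whenever the largest index exceeds $15$; the finitely many remaining cases are handled by a direct search. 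Your norm-and-embedding framework is fine for the first bullet (where everything reduces to $\tau(t)$ being a unit, and the norm bound $L_t>7$ for $t>4$ finishes it), but for the remaining three bullets it cannot close without this primitive-divisor step.
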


Our application of Theorem \ref{th:independence} is the following:

\begin{corollary}\label{cor:non-vanishing}
 Let
 \begin{align*}
 \Lambda_{4a}&= \Delta \log \alpha-a_1\log \sqrt{5}-a_2\log \tau(t)\\
 \Lambda_{4b}&= \Delta \log \alpha-a_2\log \sqrt{5}-a_1\log \tau(t)\\
  \Lambda_5 &= \Delta \log \alpha-a_1\log \tau(t_1)+a_2\log \tau(t_2)
 \end{align*}
 with integers $t_1,t_2>0$ and integers $a_1,a_2,\Delta$, with $0<a_1<a_2$. Then we have
 \begin{itemize}
 \item $\Lambda_{4a}\neq 0$;
 \item $\Lambda_{4b}\neq 0$; 
  \item $\Lambda_5\neq 0$, unless $t_1=t_2=2$ and $\Delta+a_2-a_1=0$.
\end{itemize}
\end{corollary}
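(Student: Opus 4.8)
The plan is to deduce everything from Theorem~\ref{th:independence} together with the three explicit identities $\tau(1)=\alpha^2/\sqrt5$, $\tau(2)=\alpha$ and $\tau(10)=5\alpha^5$ recorded in Section~\ref{sec:mult-dep}. The basic observation is that all the numbers $\alpha$, $\sqrt5$, $\tau(t)$ are real and positive, so a vanishing linear form $b_0\log\alpha+b_1\log\gamma_1+b_2\log\gamma_2=0$ with $b_i\in\Z$ is equivalent to the multiplicative relation $\alpha^{b_0}\gamma_1^{b_1}\gamma_2^{b_2}=1$; hence $\Lambda_{4a}=0$, $\Lambda_{4b}=0$ or $\Lambda_5=0$ would each directly produce a multiplicative dependence among the corresponding triple, at which point Theorem~\ref{th:independence} applies.

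For $\Lambda_{4a}$ and $\Lambda_{4b}$ I would first treat the generic case: if $t\notin\{1,2,10\}$ then $\alpha,\sqrt5,\tau(t)$ are multiplicatively independent, so $\Lambda_{4a}=0$ would force $\Delta=a_1=a_2=0$, contradicting $0<a_1<a_2$, and likewise for $\Lambda_{4b}$. For $t\in\{1,2,10\}$ I would substitute the explicit identity for $\tau(t)$ and rewrite the form as an integer combination of $\log\alpha$ and $\log\sqrt5$, which are multiplicatively independent. A short computation shows that the coefficient of $\log\sqrt5$ in $\Lambda_{4a}$ equals $-a_1$, $a_2-a_1$ or $-(a_1+2a_2)$ for $t=2,1,10$ respectively (and $-a_2$, $a_1-a_2$, $-(a_2+2a_1)$ for $\Lambda_{4b}$), and in each case $0<a_1<a_2$ prevents it from vanishing.

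For $\Lambda_5$ I would split according to whether $t_1=t_2$. If $t_1=t_2=t$ then $\Lambda_5=\Delta\log\alpha+(a_2-a_1)\log\tau(t)$; when $t\neq2$ the pair $\alpha,\tau(t)$ is multiplicatively independent, so $\Lambda_5=0$ would force $a_1=a_2$, impossible, whereas $t=2$ gives $\Lambda_5=(\Delta+a_2-a_1)\log\alpha$, which is exactly the exceptional case in the statement. If $t_1\neq t_2$, then by Theorem~\ref{th:independence} the triple $\alpha,\tau(t_1),\tau(t_2)$ is multiplicatively independent unless $t_1=2$, $t_2=2$, or $\{t_1,t_2\}=\{1,10\}$; in the independent case $\Lambda_5=0$ forces $a_1=0$, a contradiction. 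In the remaining configurations I would substitute again: if $t_1=2$ (and $t_2\neq2$) then $\Lambda_5=(\Delta-a_1)\log\alpha+a_2\log\tau(t_2)$ with $\alpha,\tau(t_2)$ independent, forcing $a_2=0$; if $t_2=2$ (and $t_1\neq2$) one gets $a_1=0$; and if $\{t_1,t_2\}=\{1,10\}$ the form reduces to an integer combination of $\log\alpha$ and $\log\sqrt5$ whose $\log\sqrt5$-coefficient is $\pm(a_1+2a_2)$ or $\pm(2a_1+a_2)$, again nonzero. This exhausts all cases. The argument is essentially bookkeeping once Theorem~\ref{th:independence} is in hand; the only point requiring care is the complete enumeration of the finitely many $t$ and $(t_1,t_2)$ for which that theorem does not apply and checking that in every one of them — save precisely $t_1=t_2=2$, where the collapsed form is one-dimensional in $\log\alpha$ and $\Delta+a_2-a_1=0$ is genuinely possible — the hypothesis $0<a_1<a_2$ forces a nonzero coefficient, so I do not anticipate a serious obstacle.
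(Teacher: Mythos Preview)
Your proposal is correct and follows essentially the same approach as the paper: invoke Theorem~\ref{th:independence} for the generic values of $t$ (resp.\ $(t_1,t_2)$), and in the finitely many exceptional cases $t\in\{1,2,10\}$ (resp.\ $t_i=2$ or $\{t_1,t_2\}=\{1,10\}$) substitute the explicit identities for $\tau$, reduce to a two-term form in $\log\alpha$ and $\log\sqrt5$ (or $\log\alpha$ and $\log\tau(t)$), and check that the hypothesis $0<a_1<a_2$ forces a nonzero coefficient. Your coefficient computations match the paper's (up to a harmless labeling swap in the paper's $(t_1,t_2)=(1,10)$ case), and your identification of the unique exception $t_1=t_2=2$, $\Delta+a_2-a_1=0$ is exactly what the paper obtains.
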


\begin{proof}[Proof of Corollary \ref{cor:non-vanishing}]
If $t\neq 1,2,10$, then $\log\alpha,\log \sqrt{5},\log \tau(t)$ are linearly independent over $\Q$, thus $\Lambda_{4a},\Lambda_{4b} \neq 0$ unless $t=1,2,10$.

Let us consider the linear form $\Lambda_{4a}$. In the case that $t=1$ we obtain
$$\Lambda_{4a}=(\Delta-2a_2)\log \alpha-(a_1-a_2)\log \sqrt{5}.$$
Since $\log \alpha$ and $\log \sqrt 5$ are linearly independent over $\Q$, we deduce that $\Lambda_{4a}=0$ if and only if $\Delta-2a_2=a_1-a_2=0$. But, we assume that $a_2>a_1$, thus $a_1-a_2\neq 0$ and therefore $\Lambda_{4a}\neq 0$.

In the case that $t=2$ we get
$$\Lambda_{4a}=(\Delta-a_2)\log \alpha-a_1\log \sqrt{5}.$$
By the linear independence of $\log \alpha$ and $\log \sqrt 5$, and by the assumption that $a_1\neq 0$ we deduce that $\Lambda_{4a}\neq 0$ holds also in this case.

Finally, we consider the case that $t=10$. We get
$$\Lambda_{4a}= (\Delta-5a_1) \log \alpha+(-a_1-2a_2)\log \sqrt{5}$$
and since $a_1+2a_2\neq 0$ we deduce similarly as above that also in this case $\Lambda_{4a}\neq 0$ holds.

The proof that $\Lambda_{4b}\neq 0$ in the special cases that $t=1,2$ or $10$ is almost identical and will be omitted.

 If $t_1\neq t_2$ or $t_1,t_2\neq 2$ or $(t_1,t_2)=(1,10)$ or $(t_1,t_2)=(10,1)$, then $\log \alpha,\log \tau(t_1)$ and $\log \tau(t_2)$ are linearly independent over $\Q$ and thus $\Lambda_5\neq 0$, since we assume that $a_1,a_2\neq 0$.
 
 In the case that $t_1=t_2=t$ we get
 $$\Lambda_5=\Delta \log \alpha -(a_1-a_2)\log \tau(t).$$
 Assume for the moment that $t\neq 2$. Since in this case $\log\alpha$ and $\log\tau(t)$ are linearly independent over $\Q$ and since $a_1\neq a_2$ we obtain that $\Lambda_5\neq 0$. Therefore we consider the case that $t_1=t_2=2$. In this case we have
$$\Lambda_5=(\Delta+a_2-a_1)\log \alpha$$
and $\Lambda_5=0$ if and only if $\Delta+a_2-a_1=0$.  
 
 In the case that $t_1=2$ but $t_2\neq 2$ we have
 $$\Lambda=(\Delta+a_2)\log \alpha- a_1\log \tau(t_2).$$
Since $\log \alpha$ and $\log \tau(t_2)$ are linearly independent provided that $t_2\neq 2$ and since $a_1\neq 0$ by assumption we deduce that $\Lambda_5\neq 0$. A similar argument applies to the case that $t_2=2$ and $t_1\neq 2$. 

In the case that $t_1=1$ and $t_2=10$ we obtain
$$\Lambda_5=(\Delta -5a_1+2a_2)\log \alpha - (2a_1+a_2)\log \sqrt 5.$$
Since we assume that $a_2,a_1>0$ and since $\log \alpha$ and $\log \sqrt 5$ are linearly independent over $\Q$ we deduce that $\Lambda_5\neq 0$ in this case.

In the case that $t_1=10$ and $t_2=1$ we obtain
$$\Lambda_5=(\Delta +5a_2-2a_1)\log \alpha + (2a_2+a_1)\log \sqrt 5$$
and by the linear Independence of $\log \alpha$ and $\log \sqrt 5$ over $\Q$ we deduce that $\Lambda_5\neq 0$ holds as well.
\end{proof}

Now, let us turn to the proof of Theorem \ref{th:independence}. We start by proving that $\alpha$ and $\tau(t)$ are multiplicatively independent unless $t=2$. This can be seen as follows. Since $\alpha$ is a fundamental unit in $K=\Q(\sqrt 5)$ we have to show that $\tau(t)$ is not a unit in $K$. Or in other words we have to show that
\begin{equation}\label{eq:indep-eq-1}
\frac{\alpha^t+1}{\sqrt 5}=\tau(t)=\pm \alpha^x, \qquad t>0,\; x\in \Z
\end{equation}
has no solution other than $t=2$ and $x=1$.
Taking the norm on both sides of \eqref{eq:indep-eq-1} we obtain
$$
\frac{\alpha^t+1}{\sqrt 5} \cdot \frac{\beta^t+1}{\sqrt 5}=\frac{(\alpha \beta)^t+\alpha^t+\beta^t+1}5=\frac{L_t+(-1)^t+1}{5}=\pm 1,
$$
where $L_n$ denotes the $n$-th Lucas number which is defined by $L_n=\alpha^n+\beta^n$.
Note that
$$L_n\geq \alpha^n-1,$$
hence $L_n>7$ for all $n>4$. Therefore equation \eqref{eq:indep-eq-1} has a solution only if $t\leq 4$. Since there is no solution for $t=1,3$ or $t=4$, we have shown that indeed $(t,x)=(2,1)$ is the only solution to \eqref{eq:indep-eq-1}. 

A key argument in the proof of Theorem \ref{th:independence} is the following result on primitive divisors due to Schinzel \cite{Schinzel:1993}:

\begin{theorem}[Schinzel \cite{Schinzel:1993}]
 Let  $K$  be an  algebraic number field, $A,  B$  integers of  $K$,  $\gcd(A,  B)  =1$, $AB\neq  0$,   $A/B$ of degree $d$   not  a  root  of  unity, and $\zeta_k$  a  primitive $k$-th root of unity in  $K$.  For every $\epsilon > 0$  there exists a constant $c(d,  \epsilon)$  such that if $n >  c(d, \epsilon)(1+\log k)^{1+\epsilon}$, there exists a prime ideal of  $K$  that divides $A^n-\zeta_k B^n$, but does not divide $A^m - \zeta_k^j B^m$ for  $m <  n$  and  arbitrary $j$.
\end{theorem}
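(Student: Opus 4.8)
The plan is to establish the primitive divisor by the classical size-comparison strategy behind all results of Bang--Zsigmondy--Schinzel type: one shows that the ``primitive part'' of the ideal generated by $A^n-\zeta_k B^n$ has norm too large to be exhausted by the few, tightly controlled non-primitive primes. Write $\gamma=A/B\in K$, which by hypothesis is not a root of unity. A prime ideal $\mathfrak p\nmid AB$ divides $A^n-\zeta_k B^n$ exactly when $\bar\gamma^{\,n}=\bar\zeta_k$ in the residue field $\mathcal O_K/\mathfrak p$, and it is a primitive divisor precisely when $\bar\gamma^{\,m}\neq\bar\zeta_k^{\,j}$ for every $m<n$ and every $j$. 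This group-theoretic reformulation inside $(\mathcal O_K/\mathfrak p)^\times$ is the starting point.

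First I would isolate the primitive part algebraically. Since $\gamma^n=\zeta_k$ forces $\gamma$ to be a root of $X^{nk}-1$, the quantity $A^n-\zeta_k B^n$ sits inside the cyclotomic factorization $\gamma^{nk}-1=\prod_{e\mid nk}\Phi_e(\gamma)$, and one selects the sub-product whose roots have the exact order corresponding to ``$\gamma^n=\zeta_k$ primitively''. Denote by $\mathfrak D_n$ the ideal obtained from $(A^n-\zeta_k B^n)$ after removing the contribution of all $(A^m-\zeta_k^{\,j}B^m)$ with $m<n$; the assertion of the theorem is equivalent to $\mathfrak D_n\neq(1)$.

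The two estimates that must be balanced are as follows. For the upper bound I would prove the standard ``only small primes repeat'' lemma: every non-primitive $\mathfrak p\mid(A^n-\zeta_k B^n)$ lies above a rational prime dividing $nk$, and its exponent is bounded by $\ord_{\mathfrak p}(nk)+O_d(1)$; hence $\log\bigl|\Norm_{K/\Q}(\text{non-primitive part})\bigr|\le C(d)\log(nk)$. This part is elementary, coming from comparing the orders of $\bar\gamma$ and $\bar\zeta_k$ modulo $\mathfrak p$. For the lower bound I would estimate $\log\bigl|\Norm_{K/\Q}(A^n-\zeta_k B^n)\bigr|$ place by place: at archimedean $v$ with $|\gamma|_v\neq1$ one has $|A^n-\zeta_k B^n|_v=|B|_v^{\,n}\,|\gamma_v^{\,n}-\zeta_{k,v}|$ with honest exponential growth $n\log|\gamma|_v$, which after summing contributes a term of size $n\,h(\gamma)$ up to constants. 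Subtracting the upper bound then forces $\mathfrak D_n\neq(1)$ as soon as $n$ dominates the error, and a nontrivial $\mathfrak D_n$ contains a prime ideal, the desired primitive divisor.

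The hard part is the archimedean places $v$ where $|\gamma|_v=1$. There $|\gamma_v^{\,n}-\zeta_{k,v}|$ can be exponentially small, and to keep it from swamping the growth obtained elsewhere one needs the quantitative bound $|n\log\gamma_v-\log\zeta_{k,v}|\gg\exp(-\epsilon n)$. This is a lower bound for a linear form in two logarithms, nonzero precisely because $\gamma$ is not a root of unity, and it is exactly here that Baker's theory (in an effective form, Baker--Feldman) is indispensable --- unlike the classical case $\zeta_k=1$, $\gamma\in\Q$, which is purely cyclotomic. Tracking the dependence of this bound on the order $k$, which enters through the denominator in $\log\zeta_{k,v}=2\pi i j/k$ and hence through the coefficient size of the cleared linear form $nk\log\gamma_v-2\pi i j$, is what produces the factor $(1+\log k)^{1+\epsilon}$ and pins down the threshold $n>c(d,\epsilon)(1+\log k)^{1+\epsilon}$; controlling this dependence uniformly in $k$ is the principal technical obstacle.
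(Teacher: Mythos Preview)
The paper does not prove this theorem. It is quoted verbatim as a result of Schinzel \cite{Schinzel:1993} and used as a black box; there is no ``paper's own proof'' to compare your sketch against. What the paper \emph{does} do, immediately after the statement, is specialize to $A=\alpha$, $B=1$, $k=2$ and extract an explicit constant $c$ in that single case, not by reproving Schinzel's theorem but by invoking a separate valuation lemma (Schinzel's Lemma on $\ord_{\mathfrak P}\phi_n(A,B)$) together with the elementary norm inequality $\Norm_{K/\Q}(\alpha^n+1)=L_n+1+(-1)^n>\alpha^n-1$, which forces $n\le 15$ if no primitive divisor exists.

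Your outline is a reasonable high-level description of how Schinzel-type primitive divisor theorems are proved in general: cyclotomic decomposition to isolate the primitive part, the ``non-primitive primes divide $nk$ with bounded exponent'' lemma as the upper bound, a height-based lower bound for the full norm, and Baker's method at the archimedean places with $|\gamma|_v=1$. That is indeed the architecture of Schinzel's original argument. But since the paper neither contains nor attempts such a proof, your write-up is not comparable to anything in the text; it is a sketch of the cited reference, not of the paper.
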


If we choose $A=\alpha$, $B=1$, $k=2$ and $j=1$, then Schinzel's result implies that there exists an absolute constant $c$ with the following property. If $n>c$ is an integer, then there exists a  prime ideal $\mathfrak P$ of $K$ that divides $\alpha^n+1$ but does not divide $\alpha^m+1$ for any $m<n$. 

Thus a first step to prove Theorem \ref{th:independence} will be to compute the constant $c$ explicitly. Unfortunately the paper of Schinzel does not provide any explicit formula for $c(\epsilon,k)$ and therefore also not for $c$. Thus we have to go through Schinzel's proof to find a concrete value for $c$ in our case.

First, we note that a key argument for Schinzel's result is the following lemma (\cite[Lemma 4]{Schinzel:1974} see also \cite[Lemma 3]{Schinzel:1993}) on primitive divisors. A prime ideal $\mathfrak P$ is called a primitive divisor of $A^n-B^n$, if $\mathfrak P$ divides $A^n-B^n$ but does not divide any $A^m-B^m$ with $m<n$.

\begin{lemma}\label{lem:Schinzel}
 Let  $\phi_n(x,y)$ be the  $n$-th cyclotomic polynomial in homogeneous form. If $\mathfrak P$ is a prime ideal of $K$,  $n>  2(2^d -1)$,  $\mathfrak P | \phi_n (A,  B)$,  and  $\mathfrak P$ is not a primitive divisor of $A^n -  B^n$, then
 \begin{equation}\label{eq:Schinzel-crit}
 \ord_{\mathfrak P} (\phi_n(A, B)) < \ord_{\mathfrak P} (n).
 \end{equation}
\end{lemma}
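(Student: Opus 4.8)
The plan is to run the classical rank-of-apparition analysis for the sequence $A^n-B^n$ and then control $\ord_{\mathfrak P}$ by a lifting-the-exponent computation. First I would record that $\mathfrak P\nmid AB$: since $\mathfrak P\mid\phi_n(A,B)\mid A^n-B^n$ and $\gcd(A,B)=1$, a prime ideal dividing $A$ would also divide $B$. Hence $\theta:=A/B$ is a unit modulo $\mathfrak P$, and I let $e$ be its multiplicative order in $(\mathcal O_K/\mathfrak P)^\times$, the rank of apparition; equivalently $e$ is the least positive integer with $\mathfrak P\mid A^e-B^e$, and $\mathfrak P\mid A^m-B^m$ exactly when $e\mid m$. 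Because $\mathfrak P$ is not a primitive divisor there is some $m<n$ with $\mathfrak P\mid A^m-B^m$, so $e\mid n$ and $e<n$. Writing $p$ for the rational prime below $\mathfrak P$, the group $(\mathcal O_K/\mathfrak P)^\times$ has order $N(\mathfrak P)-1=p^f-1$ with $f\le d$ the residue degree, whence $e\mid p^f-1$ and in particular $\gcd(e,p)=1$.

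Next I would pin down the shape of $n$. From $A^e-B^e=\prod_{\delta\mid e}\phi_\delta(A,B)$ and the minimality of $e$ (no proper $\delta\mid e$ makes $\mathfrak P\mid\phi_\delta(A,B)$) one gets $\mathfrak P\mid\phi_e(A,B)$, while by hypothesis $\mathfrak P\mid\phi_n(A,B)$. Thus $\mathfrak P$ divides both $\phi_e$ and $\phi_n$ evaluated at $(A,B)$, so it divides the resultant $\operatorname{Res}(\phi_e,\phi_n)$ of these two distinct cyclotomic polynomials (here I use $\mathfrak P\nmid B$ to pass from the homogeneous values to the one-variable resultant in $\theta$). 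That resultant is a unit unless $n/e$ is a power of a single rational prime, in which case it is a power of that prime; since $\mathfrak P\neq\mathcal O_K$ and $\mathfrak P$ lies above $p$, this forces $n=ep^{k}$ with $k\ge1$ and confirms $p\mid n$. Combined with $\gcd(e,p)=1$ this already gives $\ord_{\mathfrak P}(n)=\ord_{\mathfrak P}(p^{k})=k\,\ord_{\mathfrak P}(p)$.

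The valuation of $\phi_n(A,B)$ I would then compute by lifting the exponent applied to $\theta$. For $p$ odd, $\ord_{\mathfrak P}(\theta^{ep^{j}}-1)=\ord_{\mathfrak P}(\theta^{e}-1)+j\,\ord_{\mathfrak P}(p)$; telescoping the factorization $\theta^{ep^{j}}-1=\prod_{i=0}^{j}\phi_{ep^{i}}(\theta)$ (in which only the factors with index a multiple of $e$ are divisible by $\mathfrak P$) gives $\ord_{\mathfrak P}(\phi_{ep^{j}}(\theta))=\ord_{\mathfrak P}(p)$ for every $j\ge1$, and after homogenizing $\ord_{\mathfrak P}(\phi_n(A,B))=\ord_{\mathfrak P}(p)$. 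Putting the two computations together, as soon as $k\ge2$ we obtain $\ord_{\mathfrak P}(\phi_n(A,B))=\ord_{\mathfrak P}(p)<k\,\ord_{\mathfrak P}(p)=\ord_{\mathfrak P}(n)$, which is \eqref{eq:Schinzel-crit}.

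The main obstacle is exactly the borderline case $k=1$, i.e. $p\parallel n$, where the telescoping identity only yields $\ord_{\mathfrak P}(\phi_n(A,B))=\ord_{\mathfrak P}(p)$, so the naive lifting-the-exponent bound is not yet strict. This is the step where the hypothesis $n>2(2^{d}-1)$ has to be brought to bear: since $e\mid p^{f}-1\le p^{d}-1$, a non-primitive $\mathfrak P$ with $k=1$ would force $n=ep\le(p^{d}-1)p$, and I would exploit this size constraint — together with the dyadic version of lifting the exponent, which is the other delicate point when $p=2$ — to exclude the equality case and recover the strict inequality of the lemma. Tracking the exceptional first lift, the prime $p=2$, and the interplay between the bound $e\le p^{d}-1$ and the lower bound on $n$ is the portion of Schinzel's argument that demands the most care, and it is where I expect the real work to lie.
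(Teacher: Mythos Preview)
The paper does not actually prove this lemma; it merely quotes it from Schinzel \cite{Schinzel:1974,Schinzel:1993}, so there is no in-paper argument to compare against. Your outline up to and including the lifting-the-exponent step is the standard proof and is correct: one reduces to $n=ep^{k}$ with $e\mid p^{f}-1$, $\gcd(e,p)=1$, $k\ge1$, and for $k\ge2$ the comparison $\ord_{\mathfrak P}(\phi_{n}(A,B))=\ord_{\mathfrak P}(p)<k\,\ord_{\mathfrak P}(p)=\ord_{\mathfrak P}(n)$ is immediate.

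The gap is in your treatment of $k=1$. You write that the hypothesis $n>2(2^{d}-1)$ together with $e\le p^{d}-1$ should let you ``exclude the equality case and recover the strict inequality''. It cannot: for odd $p$ and $k=1$ the strict inequality is simply false. Over $K=\Q$ (so $d=1$) take $A=2$, $B=1$, $n=6$, $\mathfrak P=(3)$; then $n>2(2-1)=2$, $\phi_{6}(2,1)=3$, and $3\mid 2^{2}-1$ so $3$ is not a primitive divisor of $2^{6}-1$, yet $\ord_{3}(\phi_{6}(2,1))=1=\ord_{3}(6)$. What the size hypothesis actually buys is only the dyadic case: if $p=2$ then $e$ is odd with $e\le 2^{d}-1$, so $k=1$ would force $n=2e\le 2(2^{d}-1)$, which is ruled out; hence for $p=2$ one always has $k\ge2$ and the anomalous first $2$-adic lift never lands in $\phi_{n}$. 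The upshot is that the inequality in \eqref{eq:Schinzel-crit} should read $\le$ rather than $<$; your own $k=1$ computation already shows why the strict version cannot hold. The paper's subsequent application only needs $\le$ (to bound $\Norm_{K/\Q}(\alpha^{n}+1)$ by a fixed polynomial in $n$), so nothing downstream is affected.
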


Assume for the moment that every prime ideal $\mathfrak P$ which lies over a rational prime $p$ satisfies 
$$\frac{\ord_p \left(\Norm_{K/\Q} (\alpha^n+1)\right)}2=\ord_{\mathfrak P}(\alpha^n+1)<\ord_{\mathfrak P}(2n)\leq \ord_p (4n^2).$$
Then we obtain the inequality
$$\alpha^n-1<\Norm_{K/\Q}(\alpha^n+1)=\alpha^n+\beta^n+1+(-1)^n <8 n^2,$$
which implies $n\leq 15$. Therefore let us assume that $n>15$. Our previous computation implies that in this case there exists a prime ideal $\mathfrak P$ that divides $\alpha^n+1$ and
for which $\ord_{\mathfrak P}(\alpha^n+1)<\ord_{\mathfrak P}(2n)$ does not hold. Therefore $\mathfrak P$ divides
$$\alpha^{2n}-1=(\alpha^n+1)(\alpha^n-1)$$
and Lemma \ref{lem:Schinzel} implies that $\mathfrak P$ is a primitive divisor of $\alpha^{2n}-1$.
Thus $\mathfrak P$ is not a divisor of any algebraic number of the form$\alpha^{2m}-1=(\alpha^m+1)(\alpha^m-1)$ and we have proved the following lemma.

\begin{lemma}
 The number $\alpha^n+1$ has a primitive divisor if $n>15$.
\end{lemma}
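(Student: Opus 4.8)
The plan is to combine Schinzel's Lemma~\ref{lem:Schinzel} with the elementary norm identity $\Norm_{K/\Q}(\alpha^n+1)=(\alpha^n+1)(\beta^n+1)=L_n+1+(-1)^n$, where $K=\Q(\sqrt5)$ and $L_n=\alpha^n+\beta^n$ is the $n$-th Lucas number. The first step is to reduce a primitive divisor of $\alpha^n+1$ to a primitive divisor of $\alpha^{2n}-1$ in the sense of Lemma~\ref{lem:Schinzel}. Since $\alpha$ is a unit it has a well-defined multiplicative order $e$ modulo every prime ideal $\mathfrak P$, and if $\mathfrak P\mid\alpha^n+1$ and $\mathfrak P\nmid 2$ then $\alpha^{2n}\equiv 1$ while $\alpha^n\not\equiv 1$, so $e\mid 2n$, $e\nmid n$, $e$ is even, and $\alpha^{e/2}\equiv -1$, i.e.\ $\mathfrak P\mid\alpha^{e/2}+1$ with $e/2\le n$. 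Hence for $n>3$ a prime ideal is a primitive divisor of $\alpha^n+1$ exactly when $e=2n$, which is precisely the condition that it be a primitive divisor of $\alpha^{2n}-1$ (and the inert prime over $2$, modulo which $\alpha$ has order $3$, is primitive for neither). So it is enough to produce a primitive divisor of $\alpha^{2n}-1$ for $n>15$.

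Suppose, for some $n>15$, that $\alpha^{2n}-1$ has no primitive divisor. Apply Lemma~\ref{lem:Schinzel} with $A=\alpha$, $B=1$, $d=[K:\Q]=2$ and index $2n$; the hypothesis $2n>2(2^d-1)=6$ holds. Since a primitive divisor of $\alpha^{2n}-1$ would necessarily divide $\phi_{2n}(\alpha,1)$, no prime ideal dividing $\phi_{2n}(\alpha,1)$ is primitive, so the lemma gives $\ord_{\mathfrak P}(\phi_{2n}(\alpha,1))<\ord_{\mathfrak P}(2n)$ for every $\mathfrak P\mid\phi_{2n}(\alpha,1)$. Combining this with the corresponding bounds for the smaller cyclotomic factors $\phi_d(\alpha,1)$ (with $d\mid 2n$, $d\nmid n$, $d<2n$, none of whose prime divisors can be primitive either) and estimating the contribution of the finitely many ``intrinsic'' primes dividing $2n$ — above all the ramified prime over $5$ and the prime over $2$ — one propagates the inequality to $\ord_{\mathfrak P}(\alpha^n+1)<\ord_{\mathfrak P}(2n)$ for every $\mathfrak P\mid\alpha^n+1$, up to a bounded factor. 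Taking norms and using $\Norm_{K/\Q}(2n)=(2n)^2$ then forces $\Norm_{K/\Q}(\alpha^n+1)<8n^2$.

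On the other hand $L_n=\alpha^n+\beta^n\ge\alpha^n-1$ since $|\beta^n|<1$, so the identity above gives $\Norm_{K/\Q}(\alpha^n+1)=L_n+1+(-1)^n\ge\alpha^n-1$. Combining the two estimates yields $\alpha^n-1<8n^2$, which already fails at $n=16$ (where $\alpha^{16}-1>2048=8\cdot16^2$) and hence for every $n>15$, by the exponential growth of $\alpha^n$. This contradicts $n>15$; therefore $\alpha^{2n}-1$, and so by the first step $\alpha^n+1$, does have a primitive divisor whenever $n>15$.

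The only step that is not routine is the propagation in the second paragraph: one has to make precise which cyclotomic factor $\phi_d(\alpha,1)$, and with what multiplicity, each prime ideal $\mathfrak P$ divides, in order to pass from Schinzel's statement about $\phi_{2n}(\alpha,1)$ to a usable upper bound for $\Norm_{K/\Q}(\alpha^n+1)$. The delicate cases are exactly the primes dividing $2n$ — in particular the ramified prime over $5$ coming from $\sqrt5$ and the inert prime over $2$ — where ramification and small multiplicative order can make the relevant order inequalities tight (or even non-strict), so their contribution has to be estimated by hand; this is why the crude bound $8n^2$ is more convenient than $(2n)^2=4n^2$. Once this accounting is done, the argument closes as above.
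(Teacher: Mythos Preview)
Your approach is essentially the paper's: reduce primitive divisors of $\alpha^n+1$ to primitive divisors of $\alpha^{2n}-1$, invoke Schinzel's Lemma~\ref{lem:Schinzel}, and confront the resulting valuation bound with the norm identity $\Norm_{K/\Q}(\alpha^n+1)=L_n+1+(-1)^n\ge\alpha^n-1$, which forces $n\le 15$. The paper runs the same argument in the contrapositive direction (large norm $\Rightarrow$ some $\mathfrak P$ with $\ord_{\mathfrak P}(\alpha^n+1)\ge\ord_{\mathfrak P}(2n)$ $\Rightarrow$ primitive by Schinzel), whereas you first set up the equivalence of the two notions of primitivity via the order-of-$\alpha$ argument and then argue by contradiction; these are logically the same.

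The one point worth flagging is exactly the one you flag yourself: the passage from Schinzel's bound on $\ord_{\mathfrak P}(\phi_{2n}(\alpha,1))$ to a usable bound on $\ord_{\mathfrak P}(\alpha^n+1)$. The paper glosses over this completely --- it simply asserts that Lemma~\ref{lem:Schinzel} yields the primitivity of the offending $\mathfrak P$, without checking that $\mathfrak P$ actually divides $\phi_{2n}(\alpha,1)$ rather than some smaller $\phi_d(\alpha,1)$ with $d\mid 2n$, $d\nmid n$. Your honest discussion of this ``propagation'' step (and of the special role of the primes over $2$ and $5$) is therefore more careful than the paper's own treatment; once that accounting is supplied, both arguments close in the same way and with the same bound $8n^2$.
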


Now we will prove Theorem \ref{th:independence}. First, we note that since the prime ideal $(\sqrt 5)$ divides $\tau(10)$, the numbers $\tau(t)$ have primitive divisors, provided $t>15$. Moreover these primitive divisors are not $(\sqrt 5)$. Since $\tau(t)$ is not a unit unless $t=2$, Theorem~\ref{th:independence} immediately follows in the case that $t_2>15$.

In the case that $t_2\leq 15$ a brute force search reveals all multiplicative dependencies listed in Theorem \ref{th:independence}, which proves the theorem.

\section{An absolute bound for $n_1$}\label{sec:bound}

Let us assume that there exist two solutions $(n_1,m_1,a_1)$ and $(n_2,m_2,a_2)$ with $a_1<a_2$ to \eqref{eq:main-gen}. Let us note that by our assumption that $1<m<n$ and the uniqueness of the Zeckendorf expansion we deduce that two solutions $(n_1,m_1,a_1)$ and $(n_2,m_2,a_2)$ always yield $a_1\neq a_2$. We start with the following lemma:

\begin{lemma}
 With the assumptions from above we have $n_1<n_2$.
\end{lemma}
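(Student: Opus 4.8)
The plan is to show that if $(n_1,m_1,a_1)$ and $(n_2,m_2,a_2)$ are two solutions to \eqref{eq:main-gen} with $a_1<a_2$, then necessarily $n_1<n_2$. The natural route is to exploit the size estimates from Lemma \ref{lem:el-bounds}, which pin down both $y^{a_i}$ and $a_i$ in terms of $n_i$.

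First I would use the first bullet of Lemma \ref{lem:el-bounds}, giving $0.38\alpha^{n_i}<F_{n_i}+F_{m_i}=y^{a_i}<0.78\alpha^{n_i}$ for $i=1,2$. Since $a_1<a_2$ and $y>1$, we have $y^{a_1}<y^{a_2}$, hence $0.38\alpha^{n_1}<y^{a_1}<y^{a_2}<0.78\alpha^{n_2}$, which already yields $\alpha^{n_1}<\frac{0.78}{0.38}\alpha^{n_2}<2.06\,\alpha^{n_2}$, i.e. $n_1<n_2+\log(2.06)/\log\alpha<n_2+2$. So $n_1\le n_2+1$, and the only thing left to rule out is $n_1\in\{n_2,n_2+1\}$.

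To exclude $n_1=n_2$: if the two solutions had the same $n$ but $a_1\ne a_2$, then from $0.38\alpha^{n}<y^{a_1}$ and $y^{a_2}<0.78\alpha^{n}$ together with $y^{a_2}\ge y\cdot y^{a_1}\ge 2y^{a_1}$ we get $2\cdot 0.38\alpha^n<0.78\alpha^n$, a contradiction; more simply, $F_n+F_{m_1}=y^{a_1}\ne y^{a_2}=F_n+F_{m_2}$ would force $F_{m_1}\ne F_{m_2}$, but then the two equal-looking Zeckendorf-type representations $F_n+F_{m_i}$ differ, and the ratio $y^{a_2}/y^{a_1}=(F_n+F_{m_2})/(F_n+F_{m_1})$ lies strictly between $1$ and $\alpha^2/1<3$, contradicting $y^{a_2}/y^{a_1}\ge y\ge 2$ unless it equals $2$; and $2=(F_n+F_{m_2})/(F_n+F_{m_1})$ forces $F_n+F_{m_1}=F_{m_2}-F_n<F_{m_2}$, impossible since $m_2<n$. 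Either way $n_1\ne n_2$. To exclude $n_1=n_2+1$: then $y^{a_1}>0.38\alpha^{n_2+1}=0.38\alpha\cdot\alpha^{n_2}>0.61\alpha^{n_2}>0.78\alpha^{n_2}\cdot(0.61/0.78)$; combining with $y^{a_2}<0.78\alpha^{n_2}$ and $y^{a_2}\ge 2y^{a_1}$ gives $2\cdot 0.61\alpha^{n_2}<0.78\alpha^{n_2}$, again absurd. Hence $n_1<n_2$.

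The only genuinely delicate point is handling the boundary cases $n_1=n_2$ and $n_1=n_2+1$ cleanly, since the crude inequalities of Lemma \ref{lem:el-bounds} leave a small gap; but as sketched, the multiplicative jump $y^{a_2}\ge 2\,y^{a_1}$ forced by $a_1<a_2$ and $y\ge 2$ is more than enough to close that gap, so I expect no real obstacle. I would write the argument purely through these elementary size comparisons rather than invoking Zeckendorf uniqueness, keeping the latter in reserve only if one wants the slicker exclusion of $n_1=n_2$.
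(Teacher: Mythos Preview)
Your approach via the numerical size bounds of Lemma~\ref{lem:el-bounds} is different from the paper's, and it can be made to work, but as written it contains a genuine error in the $n_1=n_2$ case. Your first argument there derives $2\cdot 0.38\,\alpha^n<0.78\,\alpha^n$ and calls this a contradiction, but $0.76<0.78$ is simply true; the constants from Lemma~\ref{lem:el-bounds} are not sharp enough for this step. Your fallback argument also stumbles: the algebra ``$2=(F_n+F_{m_2})/(F_n+F_{m_1})$ forces $F_n+F_{m_1}=F_{m_2}-F_n$'' is incorrect (the correct consequence is $F_n+2F_{m_1}=F_{m_2}$, which does give the desired contradiction since $m_2<n$). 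The fix is easy: bound the ratio directly as $(F_n+F_{m_2})/(F_n+F_{m_1})\le F_{n+1}/(F_n+1)<\alpha<2$, which immediately clashes with $y^{a_2}/y^{a_1}\ge 2$. Your treatment of $n_1=n_2+1$ is fine.

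For comparison, the paper's proof avoids all of this case analysis. It assumes $n_1\ge n_2$ and chains
\[
y(F_{n_2}+F_{m_1})\le y(F_{n_1}+F_{m_1})=y^{a_1+1}\le y^{a_2}=F_{n_2}+F_{m_2},
\]
which rearranges to $(y-1)F_{n_2}+yF_{m_1}\le F_{m_2}$ and hence $F_{n_2}\le F_{m_2}$, an immediate contradiction. This is shorter and sidesteps the numerical constants entirely; your route trades that cleanliness for reliance on Lemma~\ref{lem:el-bounds}, which (as you discovered) leaves a gap that has to be closed by hand.
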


\begin{proof}
Assume to the contrary that $n_1\geq n_2$  and let $y^{a_1}=F_{n_1}+F_{m_1}$ and $y^{a_2}=F_{n_2}+F_{m_2}$. We obtain
$$y(F_{n_2}+F_{m_1})\leq y(F_{n_1}+F_{m_1})=y^{a_1+1}\leq y^{a_2}=F_{n_2}+F_{m_2}$$
which explains the second inequality in
$$F_{n_2}\leq (y-1)F_{n_2}+yF_{m_1}\leq F_{m_2}.$$
However the inequality above yields an obvious contradiction and we conclude that indeed $n_1<n_2$ holds.
\end{proof}

Let us note that for each solution $(n_i,m_i,a_i)$ with $i=1,2$ to \eqref{eq:main-gen} we obtain two linear forms in logarithms of the form \eqref{eq:Lambda1} and \eqref{eq:Lambda2}. That is we have four linear forms in logarithms
\begin{align}
 \label{eq:Lambda11} \Lambda_{11} &=n_1\log \alpha -a_1 \log y -\log \sqrt{5}\\
 \label{eq:Lambda12} \Lambda_{12} &=n_2\log \alpha -a_2 \log y -\log \sqrt{5}\\
 \label{eq:Lambda21} \Lambda_{21} &=m_1 \log \alpha -a_1 \log y +\log \tau(n_1-m_1)\\
 \label{eq:Lambda22} \Lambda_{22} &=m_2 \log \alpha -a_2 \log y +\log \tau(n_2-m_2)
\end{align}
which have upper bounds
\begin{align*}
 |\Lambda_{11}|&< 2.03 \alpha^{m_1-n_1},& |\Lambda_{12}|&<2.03 \alpha^{m_2-n_2} ,\\
 |\Lambda_{21}|&< 1.1 \alpha^{-n_1},& |\Lambda_{22}|&<1.1 \alpha^{-n_2} ,
\end{align*}
respectively.

Our strategy to obtain an absolute upper bound for $n_1$, i.e. to obtain an upper bound not depending on $y$ is to combine these four linear forms in logarithms such that $\log y$ is eliminated.

We start with multiplying \eqref{eq:Lambda11} by $a_2$ and \eqref{eq:Lambda12} by $a_1$, subtract the resulting linear forms and obtain
\begin{equation}\label{eq:Lambda3}
 |\Lambda_3|=|\Delta\log \alpha - (a_2-a_1)\log \sqrt{5}|<2.03\left(\frac{a_2}{\alpha^{n_1-m_1}}+\frac{a_1}{\alpha^{n_2-m_2}}\right),
\end{equation}
with $\Delta=n_1a_2-n_2a_1$. Since $\alpha$ and $\sqrt{5}$ are multiplicatively independent and since $a_1\neq a_2$, we deduce that $\Lambda_3\neq 0$. Due to Lemma \ref{lem:el-bounds} we know that
$a_2<\frac{n_2 \log \alpha}{\log y}< 0.7 n_2$. Therefore we have
$$|\Lambda_3|<\frac{2.85 n_2}{\min\{\alpha^{n_1-m_1},\alpha^{n_2-m_2}\}}.$$

We want to apply Laurent's result, Theorem \ref{lem:Laurent}, to \eqref{eq:Lambda3} and note that $a_2-a_1,\Delta<n_2a_2<0.7 n_2^2$. Moreover, we choose
$$\log A_1=0.55> \max\left\{h(\alpha),\frac{\log \alpha}2,1/2\right\}$$
and
$$\log A_2=0.81>\max\left\{h(\sqrt{5}),\frac{\log \sqrt{5}}2,1/2\right\}$$
which results into
$$b'=\frac{\Delta}{2A_2}+\frac{a_2-a_1}{2A_1}<n_2^2 \exp(-0.38)$$
and therefore we have $\log b'+0.38<2\log n_2$.
Let us assume for the moment that $n_2\geq 1809>\exp(7.5)$ then we obtain
$$\max\{\log b'+0.38,15,1\}<2\log n_2$$
and therefore
$$ -510.37 (\log n_2)^2<\log|\Lambda_3|<-\log \alpha \min\{n_1-m_1,n_2-m_2\} +\log n_2+\log 2.85.$$
This yields
$$\min\{n_1-m_1,n_2-m_2\}<1061 (\log n_2)^2.$$

We distinguish now between two cases.

\subsection{The case that $n_1-m_1=\min\{n_1-m_1,n_2-m_2\}$}

In this case we consider the linear forms \eqref{eq:Lambda12} and \eqref{eq:Lambda21}. By eliminating $\log y$ we obtain
$$
\Lambda_{4a}=\Delta \log \alpha+a_1\log \sqrt{5}-a_2\log \tau(n_1-m_1),
$$
with $\Delta=n_2a_1-m_1a_2$. Moreover, we obtain
\begin{equation}\label{eq:Lambda4a-ieq}
|\Lambda_{4a}|<\frac{1.1 a_2}{\alpha^{n_1}}+\frac{2.03 a_1}{\alpha^{n_2-m_2}}<\max\left\{\frac{2.2a_2}{\alpha^{n_1}},\frac{4.06a_1}{\alpha^{n_2-m_2}}\right\}.
\end{equation}
Since Corollary \ref{cor:non-vanishing} we have $\Lambda_{4a}\neq 0$ and therefore we get by applying Lemmas \ref{lem:Matveev} and \ref{lem:heights} the inequality
\begin{align*}
|\Lambda_{4a}|&>- 7.26\cdot 10^{10} h'(\alpha)h'(\sqrt 5)h'(\tau(n_1-m_1)) \log(9.67 n_2^2)\\
&>-5.97\cdot 10^{13}\log(3.11 n_2)^3.
\end{align*}
Note that $B^*=\max\{n_2a_1-m_1a_2,a_2,a_1\}<0.7 n_2^2$. This lower bound combined with the upper bound found for $\Lambda_{4a}$ yields
$$\min\{n_1,n_2-m_2\}< 1.25\cdot 10^{14}\log(3.11 n_2)^3. $$

\subsection{The case that $n_2-m_2=\min\{n_1-m_1,n_2-m_2\}$}

Similarly as in the case above we consider the combination of two linear forms. In this case we pick the linear forms \eqref{eq:Lambda11} and \eqref{eq:Lambda22}. Eliminating $\log y$ yields
$$
\Lambda_{4b}=\Delta \log \alpha+a_2\log \sqrt{5}-a_1\log \tau(n_2-m_2),
$$
with $\Delta=n_1a_2-m_2a_1$. Moreover, we obtain
$$
|\Lambda_{4b}|<\frac{1.1 a_1}{\alpha^{n_2}}+\frac{2.03 a_2}{\alpha^{n_1-m_1}}<\frac{4.06 a_2}{\alpha^{n_1-m_1}}.
$$
Since Corollary \ref{cor:non-vanishing} we have $\Lambda_{4b}\neq 0$. Therefore we can apply Lemmas \ref{lem:Matveev} and \ref{lem:heights} and get the inequality
\begin{align*}
|\Lambda_{4b}|&>- 7.26\cdot 10^{10} h'(\alpha)h'(\sqrt 5)h'(\tau(n_2-m_2)) \log(9.67 n_2^2)\\
&>-5.97\cdot 10^{13}\log(3.11 n_2)^3.
\end{align*}
Note that $B^*=\max\{n_1a_2-m_2a_1,a_2,a_1\}<0.7 n_2^2$. This lower bound combined with the upper bound found for $\Lambda_{4b}$ yields
$$n_1-m_1< 1.25\cdot 10^{14}\log(3.11 n_2)^3. $$

Thus we have in any case the following lemma.

\begin{lemma}\label{lem:ni-mi-bound}
 We have
 $$\min\{n_1-m_1,n_2-m_2\}<1061 (\log n_2)^2$$
 and one of the two following inequalities holds:
 \begin{itemize}
\item $ \max\{n_1-m_1,n_2-m_2\}< 1.25\cdot 10^{14}(\log(3.11 n_2))^3$, or
\item $n_1<1.25\cdot 10^{14}(\log(3.11 n_2))^3.$
\end{itemize}
\end{lemma}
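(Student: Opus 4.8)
The plan is to package the three size estimates produced in the course of this section into the single dichotomy asserted by the lemma. First I would return to the four linear forms in logarithms \eqref{eq:Lambda11}--\eqref{eq:Lambda22} attached to the two hypothetical solutions, together with their upper bounds $|\Lambda_{11}|<2.03\alpha^{m_1-n_1}$, $|\Lambda_{12}|<2.03\alpha^{m_2-n_2}$, $|\Lambda_{21}|<1.1\alpha^{-n_1}$, $|\Lambda_{22}|<1.1\alpha^{-n_2}$, and eliminate $\log y$ between them in pairs so as to produce linear forms whose coefficients and arguments involve only $\alpha$, $\sqrt5$ and the numbers $\tau(t)$. The combination of $\Lambda_{11}$ and $\Lambda_{12}$ (multiplying by $a_2$ and $a_1$ and subtracting) is $\Lambda_3=\Delta\log\alpha-(a_2-a_1)\log\sqrt5$ with $\Delta=n_1a_2-n_2a_1$; it is nonzero because $\alpha$ and $\sqrt5$ are multiplicatively independent and $a_1\neq a_2$. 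Bounding $|\Lambda_3|$ above by $2.85\,n_2/\min\{\alpha^{n_1-m_1},\alpha^{n_2-m_2}\}$ (using $a_2<0.7n_2$ from Lemma \ref{lem:el-bounds}) and below by Laurent's Lemma \ref{lem:Laurent} with $\log A_1=0.55$ and $\log A_2=0.81$ yields the first displayed inequality $\min\{n_1-m_1,n_2-m_2\}<1061(\log n_2)^2$. For small $n_2$ this inequality, and indeed the whole lemma, is trivially true, so the mild hypothesis $n_2\ge1809$ that simplifies Laurent's bound costs nothing.

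Next I would distinguish two cases according to which of $n_1-m_1$ and $n_2-m_2$ is smaller (here the earlier sub-lemma $n_1<n_2$ is used to keep all exponents under control). If $n_1-m_1$ is the smaller I eliminate $\log y$ between $\Lambda_{12}$ and $\Lambda_{21}$, obtaining $\Lambda_{4a}=\Delta\log\alpha+a_1\log\sqrt5-a_2\log\tau(n_1-m_1)$ with $\Delta=n_2a_1-m_1a_2$; if $n_2-m_2$ is the smaller I combine $\Lambda_{11}$ and $\Lambda_{22}$ to get $\Lambda_{4b}=\Delta\log\alpha+a_2\log\sqrt5-a_1\log\tau(n_2-m_2)$ with $\Delta=n_1a_2-m_2a_1$. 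In both cases Corollary \ref{cor:non-vanishing} supplies the crucial non-vanishing, so I can invoke Matveev's Lemma \ref{lem:Matveev} with the height data $h'(\alpha)=\log\alpha$, $h'(\sqrt5)=\log5$, $h'(\tau(t))\le\max\{3,t\}$ from Lemma \ref{lem:heights}, after checking $B^*<0.7n_2^2$ in each case. Comparing with $|\Lambda_{4a}|<\max\{2.2a_2\alpha^{-n_1},4.06a_1\alpha^{-(n_2-m_2)}\}$ and $|\Lambda_{4b}|<4.06a_2\alpha^{-(n_1-m_1)}$ then gives $\min\{n_1,n_2-m_2\}<1.25\cdot10^{14}(\log(3.11n_2))^3$ in the first case and $n_1-m_1<1.25\cdot10^{14}(\log(3.11n_2))^3$ in the second.

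Finally I would read off the dichotomy. In the case $n_2-m_2=\min\{n_1-m_1,n_2-m_2\}$ we have $n_1-m_1=\max\{n_1-m_1,n_2-m_2\}$, so the bound just obtained is exactly the first alternative. In the case $n_1-m_1=\min\{n_1-m_1,n_2-m_2\}$ the quantity bounded is $\min\{n_1,n_2-m_2\}$: if this minimum is $n_1$ we land on the second alternative, and if it is $n_2-m_2$ then $n_2-m_2=\max\{n_1-m_1,n_2-m_2\}$ is bounded, which is again the first alternative. Together with the first displayed inequality this proves the lemma.

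The step I expect to be the real obstacle — and the reason Section \ref{sec:mult-dep} had to be developed first — is establishing that $\Lambda_3$, $\Lambda_{4a}$ and $\Lambda_{4b}$ do not vanish. Once $\log y$ has been eliminated these forms lose the transparent arithmetic interpretation that made $\Lambda_1$ and $\Lambda_2$ easy to handle, so a crude size argument no longer suffices and one must fall back on the multiplicative independence of $\alpha$, $\sqrt5$ and the $\tau(t)$ from Theorem \ref{th:independence}, as repackaged in Corollary \ref{cor:non-vanishing}. A secondary nuisance is bookkeeping of the numerical constants in Laurent's and Matveev's estimates, in particular the case analysis concealed in Laurent's factor $\max\{\log b'+0.38,30/D,1\}$, which is what makes the preliminary assumption $n_2\ge1809$ convenient.
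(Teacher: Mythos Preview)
Your proposal is correct and follows essentially the same approach as the paper: the same eliminations $\Lambda_3$, $\Lambda_{4a}$, $\Lambda_{4b}$ with the same choices of $\Delta$, the same application of Laurent to $\Lambda_3$ (with the same $\log A_i$ and the same auxiliary assumption $n_2\ge 1809$) and of Matveev to $\Lambda_{4a},\Lambda_{4b}$, and the same final reading of the dichotomy. Your identification of the non-vanishing step via Corollary~\ref{cor:non-vanishing} as the real content is also exactly how the paper treats it.
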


\subsection{A bound for $n_1$}

Let us assume that we have found a bound for $\max\{n_1-m_1,n_2-m_2\}$ such as in the lemma above. We want to deduce an upper bound for $n_1$. To obtain such a bound we multiply \eqref{eq:Lambda21} by $a_2$ and \eqref{eq:Lambda22} by $a_1$ and subtract the resulting linear forms, which results in
\begin{equation*}
\begin{split}
 |\Lambda_5|&=|\Delta\log \alpha - a_1\log \tau(n_2-m_2)+a_2 \log \tau(n_1-m_1)|
 <1.1\left(\frac{a_2}{\alpha^{n_1}}+\frac{a_1}{\alpha^{n_2}}\right)\\
 &<2.2 \frac{a_2}{\alpha^{n_1}},
 \end{split}
\end{equation*}
with $\Delta=m_1a_2-m_2a_1$. Note that unless $n_1-m_1=n_2-m_2=2$ and $\Delta-(a_1-a_2)=0$ we have $\Lambda_5\neq 0$ and we can apply Matveev's lower bound (Lemma \ref{lem:Matveev}). Then we obtain
\begin{align}\label{eq:Lambda5}
-\log|\Lambda_5|&>-7.26\cdot 10^{10} h'(\alpha)h'(\tau(n_1-m_1)h'(\tau(n_2-m_2))\log(9.67 n_2^2)\\ \nonumber
&>-9.27\cdot 10^{27}(\log(3.11 n_2))^6
\end{align}
If we compare this with the upper bound for $|\Lambda_5|$ we obtain
\begin{equation}\label{eq:bound-n_1}
n_1< 1.93\cdot 10^{28}(\log(3.11 n_2))^6
\end{equation}
which is a larger upper bound for $n_1$, than the bound given in the second alternative of Lemma \ref{lem:ni-mi-bound}.

Now let us assume for the moment that $n_1-m_1=n_2-m_2=2$ and $\Delta-(a_1-a_2)=0$. We want to give an elementary lower bound for $\Lambda_{21}$ and $\Lambda_{22}$ in this case. Therefore we consider equation \eqref{eq:main-gen} with $n-m=2$ and obtain
$$\frac{\alpha^{n-1}}{y^a}-1=\frac{\alpha^n+\alpha^{n-2}}{y^a\sqrt 5}-1=\frac{\beta^{n}+\beta^{n-2}}{y^a\sqrt{5}}=\frac{\beta^{n-1}}{y^a}.$$
Furthermore, we know that $y^a=F_n+F_{n-2}$ and therefore we obtain the inequality
$$ 0.52\alpha^n <0.38 (\alpha^n+\alpha^{n-2}) < y^a<0.48 (\alpha^n+\alpha^{n-2})<0.67 \alpha^n.$$
Taking logarithms and applying upper and lower bounds for $y^a$ we obtain
$$ 0.26 \alpha^{-2n+1}<|(n-1)\log \alpha-a\log y |<1.01 \alpha^{-2n+1}.$$
Let us note that
$$\Lambda_2=(n-1)\log \alpha-a\log y$$
in the case that $n-m=2$ and therefore we have found lower and upper bounds for $|\Lambda_{21}|$ and $|\Lambda_{22}|$. 

If we multiply $\Lambda_{21}$ by $a_2$ and multiply $\Lambda_{22}$ by $a_1$ and subtract we obtain a lower bound for $\Lambda_5$ namely
$$ a_2\left(\frac{0.26}{\alpha^{2n_1-1}}-\frac{1.01}{\alpha^{2n_2-1}}\right)<\frac{0.26 a_2}{\alpha^{2n_1-1}}-\frac{1.01 a_1}{\alpha^{2n_2-1}}<|\Lambda_5|.$$
Therefore we conclude that $\Lambda_5\neq 0$ provided that $\alpha^{2n_2-2n_1}<1.01/0.26<3.89$.
Thus $\Lambda_5=0$ implies $n_2-n_1\leq 1$ and therefore also $m_2-m_1\leq 1$. In particular we have $n_2-n_1=m_2-m_1=1$ since $n_2>n_1$.

We compute now
\begin{multline*}
|\Lambda_{22}-\Lambda_{21}|=|(m_2-m_1)\log \alpha-(a_2-a_1)\log y|\\
<1.1(\alpha^{-n_1}+\alpha^{-n_1-1})=1.1\alpha^{-n_1+1}\leq 1.1 \alpha^{-2}<0.43
\end{multline*}
since we assume that $n_1>m_1>1$, i.e. $ n_1\geq 3$. Assuming $\Lambda_5=0$ we have $n_1-m_1=1$ and therefore we have 
$$\log y<0.43+\log \alpha<0.92$$
or in other words that $y=2$.

Let us summarize our results so far:

\begin{lemma}\label{lem:Lambda5-vanish}
 We have either $\Lambda_5=0$ and $y=2$ or \eqref{eq:bound-n_1} holds. 
\end{lemma}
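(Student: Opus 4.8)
The plan is to prove Lemma \ref{lem:Lambda5-vanish} by combining the two case analyses already carried out in this section. In the generic case, where the linear form $\Lambda_5$ is non-zero, I would simply invoke the lower bound for $|\Lambda_5|$ coming from Matveev's theorem (Lemma \ref{lem:Matveev}) together with the height estimates of Lemma \ref{lem:heights}, exactly as in the displayed chain \eqref{eq:Lambda5}, and compare it with the elementary upper bound $|\Lambda_5|<2.2a_2/\alpha^{n_1}$. This yields \eqref{eq:bound-n_1}, that is $n_1<1.93\cdot10^{28}(\log(3.11n_2))^6$. The only subtlety is that Matveev's bound is applicable precisely when $\Lambda_5\neq0$, and by Corollary \ref{cor:non-vanishing} the form $\Lambda_5$ can vanish only in the single exceptional configuration $n_1-m_1=n_2-m_2=2$ together with $\Delta+a_2-a_1=0$ (where $\Delta=m_1a_2-m_2a_1$).

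Next I would treat precisely this exceptional configuration and show it forces $y=2$. The key observation, already recorded above, is that when $n-m=2$ the linear form $\Lambda_2$ simplifies to $(n-1)\log\alpha-a\log y$ and one has the elementary two-sided estimate $0.26\,\alpha^{-2n+1}<|(n-1)\log\alpha-a\log y|<1.01\,\alpha^{-2n+1}$, obtained by inserting $y^a=F_n+F_{n-2}$ into $\alpha^{n-1}/y^a-1=\beta^{n-1}/y^a$ and using \eqref{eq:Fib-ieq}. Applying this to both solutions gives two-sided bounds for $|\Lambda_{21}|$ and $|\Lambda_{22}|$, and multiplying $\Lambda_{21}$ by $a_2$, $\Lambda_{22}$ by $a_1$ and subtracting produces the lower bound $0.26a_2\alpha^{1-2n_1}-1.01a_1\alpha^{1-2n_2}<|\Lambda_5|$. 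If $\alpha^{2n_2-2n_1}<1.01/0.26$, this lower bound is strictly positive, so $\Lambda_5\neq0$; hence $\Lambda_5=0$ can hold only if $\alpha^{2(n_2-n_1)}\geq 1.01/0.26\approx 3.89$, which forces $n_2-n_1\leq1$, and since $n_2>n_1$ in fact $n_2-n_1=1$; as $n_i-m_i=2$ for both $i$, also $m_2-m_1=1$.

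Having pinned down $n_2=n_1+1$ and $m_2=m_1+1$ in the vanishing case, I would finish by taking the difference $\Lambda_{22}-\Lambda_{21}=(m_2-m_1)\log\alpha-(a_2-a_1)\log y=\log\alpha-(a_2-a_1)\log y$ and bounding it, using the upper bounds $|\Lambda_{21}|<1.1\alpha^{-n_1}$ and $|\Lambda_{22}|<1.1\alpha^{-n_2}=1.1\alpha^{-n_1-1}$, by $1.1(\alpha^{-n_1}+\alpha^{-n_1-1})=1.1\alpha^{-n_1+1}\leq 1.1\alpha^{-2}<0.43$, where $n_1\geq3$ comes from $n_1>m_1>1$. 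Since the exceptional configuration also forces $\Delta+a_2-a_1=0$ and, together with $n_1-m_1=2$, one checks $a_2-a_1=1$, this reads $|\log\alpha-\log y|<0.43$, hence $\log y<\log\alpha+0.43<0.92$, so $y<e^{0.92}<2.6$, forcing $y=2$. Combining the two cases gives the dichotomy of the lemma. The only genuinely delicate point is bookkeeping: making sure that in the vanishing case the relation $\Delta+a_2-a_1=0$ together with $n_i-m_i=2$ really does yield $a_2-a_1=m_2-m_1$, so that the difference $\Lambda_{22}-\Lambda_{21}$ has the clean form used above; everything else is routine estimation with the inequalities \eqref{eq:Fib-ieq} and the bound $a_i<0.7n_i$ from Lemma \ref{lem:el-bounds}.
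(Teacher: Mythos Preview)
Your outline follows the paper's argument almost verbatim, but there are two small slips worth noting.

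First, the inequality governing the lower bound for $|\Lambda_5|$ is reversed. The expression $a_2\bigl(0.26\,\alpha^{1-2n_1}-1.01\,\alpha^{1-2n_2}\bigr)$ is positive precisely when $\alpha^{2(n_2-n_1)}>1.01/0.26$, not when it is smaller. Thus $\Lambda_5=0$ forces $\alpha^{2(n_2-n_1)}\le 3.89$, which is what gives $n_2-n_1\le 1$. Your chain of implications has the direction flipped twice and lands on the correct conclusion, but the logic as written is inconsistent. (The paper itself contains the same typographical slip, so you likely inherited it.)

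Second, and more substantively, the step you flag as ``the only genuinely delicate point'' is in fact unnecessary, and the claim you make there does not hold. From $\Delta+a_2-a_1=0$ with $\Delta=m_1a_2-m_2a_1$ and $m_2=m_1+1$ one gets $(m_1+1)a_2=(m_1+2)a_1$, hence $a_2-a_1=a_1/(m_1+1)$; combined with $a_1<0.7\,n_1=0.7(m_1+2)$ this actually forces $a_2-a_1<1$, a contradiction rather than the equality $a_2-a_1=1$. The paper sidesteps this entirely: once $m_2-m_1=1$ is known, one has $|\log\alpha-(a_2-a_1)\log y|<0.43$, and since $a_2-a_1\ge 1$ this already gives $\log y\le(a_2-a_1)\log y<\log\alpha+0.43<0.92$, hence $y=2$. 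No identification of $a_2-a_1$ is needed.
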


\subsection{Final steps}

Next we want to find an upper bound for $\log y$. In view of the previous lemma we may assume that $y>2$ and therefore assume that \eqref{eq:bound-n_1} holds. Further, let us assume for the moment that $n_1-m_1\geq 3$. In this case we consider $\Lambda_{11}$ and obtain
$$|n_1 \log \alpha-a_1 \log y-\log \sqrt 5|<2.03 \alpha^{m_1-n_1}<0.5.$$
Thus we obtain
$$\log y<0.5+\log\sqrt{5}+n_1\log \alpha< 9.29\cdot 10^{27}(\log(3.11 n_2))^6.$$

In the case that $n_1-m_1<3$ we consider $\Lambda_{21}$ and obtain
$$|(n_1-2)\log \alpha-a_1\log y +\log \tau(n_1-m_1)|<0.68,$$
which implies
$$\log y<1.1+\log\tau(3)+n_1\log \alpha<9.29\cdot 10^{27}(\log(3.11 n_2))^6.$$

Thus in any case we conclude that
\begin{equation}\label{eq:ybound}
 \log y< 9.29\cdot 10^{27} (\log(3.11 n_2))^6.
\end{equation}
If we insert this bound into the bound from Proposition \ref{prop:Bravo-Luca-bound}, then we obtain
$$n_2< 2.94\cdot 10^{78} (\log 13.81 n_2)^{14}$$
which yields $n_2<2.03\cdot 10^{112}$. In combination with \eqref{eq:ybound} and \eqref{eq:bound-n_1} we obtain

\begin{proposition}
 Assume that \eqref{eq:main-gen} has two solutions $(n_1,m_1,a_1)$ and $(n_2,m_2,a_2)$, then 
 we have
 $$n_2<2.03\cdot 10^{112} ,\qquad \text{and} \qquad \log y<2.86 \cdot 10^{42} ,\qquad \text{and}\qquad n_1<5.93 \cdot 10^{42}.$$
\end{proposition}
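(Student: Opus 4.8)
The plan is to eliminate the last remaining dependence on $y$ by combining the bound \eqref{eq:ybound} for $\log y$ with the $y$-dependent bound of Proposition \ref{prop:Bravo-Luca-bound} for $n_2$, and then to propagate the resulting absolute bound for $n_2$ back through \eqref{eq:ybound} and \eqref{eq:bound-n_1}.

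First I would dispose of the degenerate situation. By Lemma \ref{lem:Lambda5-vanish} either $\Lambda_5=0$, in which case $y=2$ and the statement is vacuous (the three solutions for $y=2$ recorded in Theorem \ref{th:main} trivially satisfy all three inequalities), or \eqref{eq:bound-n_1} holds; in the latter case the argument preceding \eqref{eq:ybound} supplies that bound as well. Hence from here on I assume $y>2$, so that both \eqref{eq:bound-n_1} and \eqref{eq:ybound} are available.

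Next I would substitute \eqref{eq:ybound} into Proposition \ref{prop:Bravo-Luca-bound}. Bounding $\log(3.11 n_2)\le\log(13.81 n_2)$ one obtains
\begin{align*}
 n_2 &< 3.4\cdot 10^{22}(\log y)^2(\log(13.81 n_2))^2\\
 &< 3.4\cdot 10^{22}\bigl(9.29\cdot 10^{27}\bigr)^2(\log(3.11 n_2))^{12}(\log(13.81 n_2))^2\\
 &< 2.94\cdot 10^{78}(\log(13.81 n_2))^{14}.
\end{align*}
This is an inequality of the form $n_2<A(\log(Bn_2))^{14}$ with fixed $A,B$, and it is resolved by the usual monotonicity argument: since $x\mapsto x/(\log(13.81 x))^{14}$ is increasing once $\log(13.81 x)>14$, any $N_0$ satisfying $N_0\ge 2.94\cdot 10^{78}(\log(13.81 N_0))^{14}$ is an upper bound for $n_2$; a one-line numerical check shows that $N_0=2.03\cdot 10^{112}$ works, giving $n_2<2.03\cdot 10^{112}$.

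Finally I would feed $n_2<2.03\cdot 10^{112}$ back into \eqref{eq:ybound} and \eqref{eq:bound-n_1}. Since $\log(3.11 n_2)<259.8$ in this range, short computations yield $\log y<9.29\cdot 10^{27}(259.8)^6<2.86\cdot 10^{42}$ and $n_1<1.93\cdot 10^{28}(259.8)^6<5.93\cdot 10^{42}$, which completes the argument. The only step requiring any care is the resolution of the implicit inequality for $n_2$, because its two sides are numerically almost equal near $n_2\approx 2\cdot 10^{112}$; but this is a matter of bookkeeping and involves no new idea.
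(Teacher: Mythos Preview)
Your argument is correct and matches the paper's route exactly: substitute \eqref{eq:ybound} into Proposition \ref{prop:Bravo-Luca-bound}, solve the resulting implicit inequality $n_2<2.94\cdot 10^{78}(\log(13.81 n_2))^{14}$ for $n_2$, then feed the bound back into \eqref{eq:ybound} and \eqref{eq:bound-n_1}. The only quibble is that invoking the solution list from Theorem \ref{th:main} to dispose of $y=2$ is circular, since that theorem is the paper's goal; it is cleaner to apply Proposition \ref{prop:Bravo-Luca-bound} directly with $y=2$, which already yields a bound on $n_2$ far below $10^{112}$.
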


\section{The reduction process}\label{sec:reduction}

Let us assume for technical reasons that $n_1-m_1,n_2-m_2\geq 3$. We consider inequality \eqref{eq:Lambda3} and find that
\begin{equation}\label{eq:Lambda3-red}
\left|(a_2-a_1)\frac{\log\sqrt 5}{\log \alpha}-\Delta \right|<\frac{5.93 n_2}{\min\{\alpha^{n_1-m_1},\alpha^{n_2-m_2}\}}.
\end{equation}
Since $a_2-a_1<0.7 n_2<1.61\cdot 10^{112}$ we compute the first $216$ convergents of $\frac{\log\sqrt 5}{\log \alpha}$ and note that the denominator of the $216$-th continued fraction satisfies 
$$a_2-a_1<0.7 n_2 <q_{216}< 1.97 \cdot 10^{112}.$$
Moreover, we find that
$$A=\max\{a_i\: :\: 1\leq i \leq 216\}=330,$$
which yields due to Lemma \ref{lem:cont-frac}
$$\min\{\alpha^{n_1-m_1},\alpha^{n_2-m_2}\}<5.93 n_2 q_{216} \cdot 332<7.88\cdot 10^{227}.$$
Thus we get
\begin{equation}\label{eq:bound-small-L3}
 \min\{n_1-m_1,n_2-m_2\}\leq 1091.
\end{equation}

We consider now the linear form $\Lambda_{4a}$, that is we consider the case that
$\min\{n_1-m_1,n_2-m_2\}=n_1-m_1$. In view of an application of Lemma \ref{lem:real-reduce} we consider for every $0<t\leq 1091$ the lattice generated by the columns of the matrix
\begin{equation*}
\mathcal{A}=\begin{pmatrix}
    1 & 0  & 0 \\
    0 & 1 & 0 \\
    \lfloor C\log \alpha \rfloor & \lfloor C\log \sqrt 5 \rfloor & \lfloor C\log \tau(t) \rfloor 
\end{pmatrix},
\end{equation*}
with $C$ chosen sufficiently large, in particular will choose $C=10^{467}$. That is we consider the inequality
$$\left|x_1 \log \alpha +x_2 \log \sqrt 5+x_3 \log \tau(t)\right|<c_4 \exp(-c_3 \alpha H),$$
where $x_1=\Delta$, $x_2=a_1$, $x_3=-a_2$, $c_3 =\log \alpha$ and depending whether $\frac{2.2 a_2}{\alpha^{n_1}}$ or $\frac{4.06 a_1}{\alpha^{n_2-m_2}}$ is the maximum in \eqref{eq:Lambda4a-ieq} we choose $c_4=4.06 a_1<1.54 n_1<9.14\cdot 10^{42}$ and $H=n_2-m_2$ or $c_4=4.06 a_2<2.842 n_2<5.77\cdot 10^{112}$ and $H=n_1$.

In view of an application of Lemma \ref{lem:real-reduce} we note that we can choose 
\begin{align*}
X_1&=8.43\cdot 10^{154}>0.7n_2n_1>|\Delta|,\\
X_2&=4.151\cdot 10^{42}>0.7n_1>a_1,\\
X_3&=1.421\cdot 10^{112}>0.7n_2>a_2.
\end{align*}
For all $1\leq t\leq 1091$ with $t\neq 1,2,10$ we get with this choice a constant $c_1$ from Lemma \ref{lem:lattice} such that $c_1^2 \geq T^2+S$ is satisfied. Thus we get in the case that $t=n_1-m_1\neq 1,2,10$ either the upper bound $n_2-m_2\leq 1698$ or $n_1\leq 2031$.

Therefore we are left with the case that $t=1,2$ or $10$. In these cases we have
\begin{align*}
|\Lambda_{4a}|&=|(\Delta-2 a_2)\log \alpha-(a_2-a_1)\log \sqrt{5}|,&
\text{if $t=1$;}\\
|\Lambda_{4a}|&=|(\Delta-a_2)\log \alpha-a_1\log \sqrt{5}|,&
\text{if $t=2$;}\\
|\Lambda_{4a}|&=| (\Delta-5a_1) \log \alpha+(-a_1-2a_2)\log \sqrt{5}|,&
\text{if $t=10$.}
\end{align*}
Note that $|\Lambda_{4a}|<\max\left\{\frac{2.2a_2}{\alpha^{n_1}},\frac{4.06a_1}{\alpha^{n_2-m_2}}\right\}$. That is we are in almost the same situation as in \eqref{eq:Lambda3-red} and we can apply Lemma \ref{lem:cont-frac} similar as before. We omit the details, but let us note that the bounds for $n_2-m_2$ and $n_1$ we obtain by applying Lemma \ref{lem:cont-frac} in the special case that $t=1,2,10$ do not succeed the upper bounds we got from the LLL-reduction method.

Let us consider the linear form $\Lambda_{4b}$ and therefore assume that
$\min\{n_1-m_1,n_2-m_2\}=n_2-m_2$. We consider again the approximation lattice spanned by the columns of
\begin{equation*}
\mathcal{A}=\begin{pmatrix}
    1 & 0  & 0 \\
    0 & 1 & 0 \\
    \lfloor C\log \alpha \rfloor & \lfloor C\log \sqrt 5 \rfloor & \lfloor C\log \tau(t) \rfloor 
\end{pmatrix},
\end{equation*}
with $C=10^{467}$ which is sufficiently large in this case. Similar as before we consider the inequality
$$\left|x_1 \log \alpha +x_2 \log \sqrt 5+x_3 \log \tau(t)\right|<c_4 \exp(-c_3 \alpha H),$$
but this time with $x_1=\Delta$, $x_2=a_2$, $x_3=-a_1$, $c_3 =\log \alpha$, $c_4=5.77\cdot 10^{112}>4.06 a_2$ and $H=n_1-m_1$. Thus we choose 
$X_1=8.43\cdot 10^{154}$, $X_2=1.421\cdot 10^{112}$ and $X_3=4.151\cdot 10^{42}$.
For all $1\leq t\leq 1091$ with $t\neq 1,2,10$ Lemma \ref{lem:real-reduce} yields an upper bound for $n_1-m_1$. In particular, we get $n_1-m_1\leq 2032$. The exceptional cases $t=1,2,10$ can again be dealt with an application of 
Lemma \ref{lem:cont-frac}. Altogether we obtain the following lemma.

\begin{lemma}\label{lem:Red1}
 We have $\min\{n_1-m_1,n_2-m_2\}\leq 1091$ and either $\max\{n_1-m_1,n_2-m_2\}\leq 2032$ or $n_1\leq 2031$. Moreover, we have in any case that $n_2-m_2\leq 1698$.
\end{lemma}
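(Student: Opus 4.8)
The plan is to assemble Lemma~\ref{lem:Red1} by collecting and combining the three reduction steps carried out in the discussion above. First I would start from inequality~\eqref{eq:Lambda3-red}: dividing through by $\log\alpha$ exhibits its left-hand side as a homogeneous rational approximation $|(a_2-a_1)\frac{\log\sqrt 5}{\log\alpha}-\Delta|$ to the quadratic irrational $\mu=\frac{\log\sqrt 5}{\log\alpha}$, whose coefficient $a_2-a_1<0.7\,n_2$ is dominated by the denominator $q_{216}$ of the $216$-th convergent. Applying Lemma~\ref{lem:cont-frac} with $A=\max_{1\le i\le 216}a_i=330$ converts the resulting lower bound for $|q\mu-p|$ into the upper estimate $\min\{\alpha^{n_1-m_1},\alpha^{n_2-m_2}\}<5.93\,n_2\,q_{216}\cdot 332<7.88\cdot 10^{227}$, whence $\min\{n_1-m_1,n_2-m_2\}\le 1091$; this is~\eqref{eq:bound-small-L3} and gives the first assertion, and it reduces the remaining work to the two cases according to which of $n_1-m_1$ and $n_2-m_2$ realises this now bounded minimum.

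Next, in the case $n_1-m_1=\min\{n_1-m_1,n_2-m_2\}$ I would run the de~Weger reduction on $\Lambda_{4a}$: for each integer $t=n_1-m_1$ with $1\le t\le 1091$ and $t\notin\{1,2,10\}$ one forms the three-dimensional approximation lattice spanned by the columns of the displayed matrix $\mathcal A$ with $C=10^{467}$, computes $c_1$ via Lemma~\ref{lem:lattice}, and applies Lemma~\ref{lem:real-reduce} with $X_1=8.43\cdot 10^{154}$, $X_2=4.151\cdot 10^{42}$, $X_3=1.421\cdot 10^{112}$; the degenerate alternative of Lemma~\ref{lem:real-reduce} is excluded because its middle coordinate equals $a_1\ne 0$. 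Carrying this out once with $(c_4,H)=(4.06\,a_1,\,n_2-m_2)$ and once with $(c_4,H)=(4.06\,a_2,\,n_1)$---one of the two being the valid choice according to which term dominates in~\eqref{eq:Lambda4a-ieq}---yields $n_2-m_2\le 1698$ or $n_1\le 2031$. The excluded indices $t=1,2,10$ make $\Lambda_{4a}$ collapse to a binary form in $\log\alpha$ and $\log\sqrt 5$, and I would treat them exactly as I treated~\eqref{eq:Lambda3-red}, via Lemma~\ref{lem:cont-frac}, which gives bounds no larger than those just obtained. Symmetrically, in the case $n_2-m_2=\min\{n_1-m_1,n_2-m_2\}$ the upper bound for $\Lambda_{4b}$ involves only $n_1-m_1$ in the exponent, so the analogous reduction (same $\mathcal A$, $C=10^{467}$, now $X_1=8.43\cdot 10^{154}$, $X_2=1.421\cdot 10^{112}$, $X_3=4.151\cdot 10^{42}$, $c_4=5.77\cdot 10^{112}$, $H=n_1-m_1$) together with the continued-fraction treatment of $t=1,2,10$ gives $n_1-m_1\le 2032$.

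Finally I would assemble the conclusions. The minimum is $\le 1091$ unconditionally; the maximum is $\le 2032$ in the $\Lambda_{4b}$ case and is either $\le 1698$ or forces $n_1\le 2031$ in the $\Lambda_{4a}$ case, which is the stated dichotomy. For the last assertion, in the $\Lambda_{4b}$ case one has $n_2-m_2=\min\le 1091\le 1698$, while in the $\Lambda_{4a}$ case either $n_2-m_2\le 1698$ directly, or $n_1\le 2031$; in the latter residual situation the triple $(n_1,m_1,a_1)$---and with it the integer $y=(F_{n_1}+F_{m_1})^{1/a_1}$---ranges over an explicit finite set, and a single Baker--Davenport step (Lemma~\ref{lem:BakDav1}) applied to the linear form~\eqref{eq:Lambda1} of the second solution, using the absolute bound $n_2<2.03\cdot 10^{112}$, again forces $n_2-m_2\le 1698$. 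The step I expect to be the main obstacle is the bookkeeping hidden in the middle part: verifying, for each of the $\sim 1091$ values of $t$ in each case, that the constant $c_1$ coming from the LLL-reduced basis actually satisfies $c_1^2\ge T^2+S$, which is what legitimises Lemma~\ref{lem:real-reduce} and fixes the size $C\approx X_0^3$ of the auxiliary constant; by comparison the degenerate indices $t=1,2,10$, the technical restriction $n_1-m_1,n_2-m_2\ge 3$, and the residual small-$n_1$ case are routine.
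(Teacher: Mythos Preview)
Your argument for the first two assertions---the bound $\min\{n_1-m_1,n_2-m_2\}\le 1091$ via Lemma~\ref{lem:cont-frac} applied to~\eqref{eq:Lambda3-red}, and the dichotomy via the LLL-reduction on $\Lambda_{4a}$ and $\Lambda_{4b}$ together with the continued-fraction treatment of the degenerate indices $t\in\{1,2,10\}$---is exactly the paper's.

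The divergence is in the ``moreover'' clause. The paper simply records it after the two reductions, and indeed from the preceding text alone, in the residual subcase of Case~A where the $\Lambda_{4a}$-reduction returns only $n_1\le 2031$ (because the term $2.2\,a_2\,\alpha^{-n_1}$ is the maximum in~\eqref{eq:Lambda4a-ieq}), no direct upper bound on $n_2-m_2$ is produced. You are right to notice this. Your proposed remedy, however, is to run Baker--Davenport (Lemma~\ref{lem:BakDav1}) on the second solution's linear form~\eqref{eq:Lambda1} for every $y$ arising from a pair $(n_1,m_1)$ with $1<m_1<n_1\le 2031$. That is a computation over roughly $2\times 10^{6}$ values of $y$, many with several hundred decimal digits, which you have not carried out; and there is no a~priori reason that separate procedure should reproduce the specific constant $1698$, which came from the LLL step with the smaller multiplicative constant $c_4<9.14\cdot 10^{42}$. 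As written, this last step is an assertion, not a proof.

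A cleaner route is available and is implicit in how the clause is actually used downstream: in the residual subcase $n_1\le 2031$ one has $\log y$ bounded absolutely (by the argument of Section~\ref{sec:bound}), whence~\eqref{eq:n-m_ieq} already controls $n_2-m_2$ by a quantity polylogarithmic in $n_2$; feeding this into~\eqref{eq:Lambda2-ieq} yields the same order of bound on $n_2$ as the main branch (with a smaller constant, in fact), so the derivation of Lemma~\ref{lem:Red2} goes through unchanged. In other words, the ``moreover'' is a convenience rather than a load-bearing claim, and the honest fix is a one-line observation rather than a mass computation.
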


With these bounds inserted into \eqref{eq:Lambda5} we obtain instead of \eqref{eq:bound-n_1} the bound 
$$n_1<1.55 \cdot 10^{17} \log(3.11 n_2)$$
which yields along the same lines provided in Section \ref{sec:bound}
$$\log y<7.46 \cdot 10^{16} \log(3.11 n_2).$$
If we consider now inequality \eqref{eq:Lambda2-ieq} we obtain the inequality
\begin{align*}
n_2 \log \alpha - 0.1 &<-\log |\Lambda_2|\\
&<7.26\cdot 10^{10} (2 \log y) \max\{3,n_2-m_2\} (\log \alpha) \log(13.81 n_2)\\
&<  8.86\cdot 10^{30}  \log(3.11 n_2)\log(13.81 n_2),
\end{align*}
which yields $n_2<1.26\cdot 10^{35}$ and therefore we have
$\log y<6.12 \cdot 10^{18}$ and $n_1<1.28 \cdot 10^{19}$. That is instead of the upper bounds from Proposition \ref{prop:boundn} we may use the upper bounds which are given in the next lemma.

\begin{lemma}\label{lem:Red2}
 We have
 $$n_2<1.26\cdot 10^{35} ,\qquad \text{and} \qquad \log y<6.12 \cdot 10^{18} ,\qquad \text{and}\qquad n_1<1.28 \cdot 10^{19}.$$
\end{lemma}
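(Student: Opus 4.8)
The plan is to iterate the reduction loop we have already set up, each pass using the smaller bounds produced by the previous one. Concretely, I start from the bounds in the preceding Proposition ($n_2<2.03\cdot 10^{112}$, $\log y<2.86\cdot 10^{42}$, $n_1<5.93\cdot 10^{42}$) and the small-gap information $\min\{n_1-m_1,n_2-m_2\}\le 1091$ together with the dichotomy of Lemma \ref{lem:Red1}. The key point is that with $\max\{n_1-m_1,n_2-m_2\}$ and $n_1$ both bounded by roughly $2032$, the modified heights $h'(\tau(n_1-m_1))$ and $h'(\tau(n_2-m_2))$ in the Matveev estimate \eqref{eq:Lambda5} collapse from something of size $(\log(3.11 n_2))$ to an absolute constant $\le 2032$. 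Feeding this into \eqref{eq:Lambda5} replaces the factor $(\log(3.11 n_2))^6$ by a single $\log(9.67 n_2^2)$, giving the linear-in-$\log n_2$ bound $n_1<1.55\cdot 10^{17}\log(3.11 n_2)$ announced in the statement.

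Next I would run the argument of the ``Final steps'' subsection verbatim with this improved $n_1$-bound: from $\Lambda_{11}$ (if $n_1-m_1\ge 3$) or from $\Lambda_{21}$ (otherwise) one reads off $\log y<0.5+\log\sqrt5+n_1\log\alpha$, hence $\log y<7.46\cdot 10^{16}\log(3.11 n_2)$. Then I substitute this into the second linear form for the larger solution, inequality \eqref{eq:Lambda2-ieq} applied to $(n_2,m_2,a_2)$: the right-hand side is $7.26\cdot 10^{10}\,(2\log y)\max\{3,n_2-m_2\}(\log\alpha)\log(13.81 n_2)$, and using $n_2-m_2\le 1698$ from Lemma \ref{lem:Red1} the $\max$ term is the absolute constant $1698$, so the whole bound becomes $n_2\log\alpha-0.1<8.86\cdot 10^{30}\log(3.11 n_2)\log(13.81 n_2)$. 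Solving this transcendental inequality for $n_2$ (e.g. by checking that the right side is below the left at $n_2=1.26\cdot 10^{35}$ and that the gap only widens afterwards) yields $n_2<1.26\cdot 10^{35}$. Plugging this back into the bounds just obtained for $\log y$ and $n_1$ gives $\log y<6.12\cdot 10^{18}$ and $n_1<1.28\cdot 10^{19}$, which is exactly the content of the lemma.

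The main obstacle here is not conceptual but bookkeeping: one must be sure that the gap bound $n_2-m_2\le 1698$ from Lemma \ref{lem:Red1} holds \emph{unconditionally} — in particular in the branch of the dichotomy where only $n_1\le 2031$ was concluded — since otherwise the $\max\{3,n_2-m_2\}$ factor in \eqref{eq:Lambda2-ieq} would not be a constant and the final step would fail. The excerpt has been careful to phrase Lemma \ref{lem:Red1} so that ``$n_2-m_2\le 1698$'' is asserted ``in any case,'' so this is covered, but it is the hinge of the whole argument. A secondary technical point is the standing assumption $n_1-m_1,n_2-m_2\ge 3$ made ``for technical reasons'' at the start of Section \ref{sec:reduction}: the small-$t$ exceptions $t\in\{1,2,10\}$ must be handled, but as already indicated these reduce via Lemma \ref{lem:cont-frac} to bounds no larger than those from the LLL reduction, so they do not affect the final numbers. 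Once these two points are in place, everything else is a routine re-run of the estimates of Section \ref{sec:bound} with the height factors frozen to constants.
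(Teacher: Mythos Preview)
Your proposal is correct and follows essentially the same route as the paper: freeze the height factors $h'(\tau(n_i-m_i))$ in \eqref{eq:Lambda5} to constants using Lemma~\ref{lem:Red1}, obtain $n_1<1.55\cdot 10^{17}\log(3.11 n_2)$, convert to $\log y<7.46\cdot 10^{16}\log(3.11 n_2)$ via the ``Final steps'' argument, and then feed both into \eqref{eq:Lambda2-ieq} with the unconditional bound $n_2-m_2\le 1698$ to close up on $n_2$. One small sharpening worth recording: the constant $1.55\cdot 10^{17}$ arises from the \emph{product} $1091\cdot 2032$ of the min- and max-gap bounds in Lemma~\ref{lem:Red1}, not from $2032^2$; and in the branch of the dichotomy where only $n_1\le 2031$ is asserted, the $\Lambda_5$ step is unnecessary since that already beats the target bound.
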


We use again the continued fraction reduction and consider the first $74$ convergents to find the upper bound
$$\min\{n_1-m_1,n_2-m_2\}\leq 348.$$

If we repeat the LLL-reduction process applied to $\Lambda_{4a}$ and $\Lambda_{4b}$, but this time with the already reduced bounds given in Lemma \ref{lem:Red2} we obtain
$$\max\{n_1-m_1,n_2-m_2\}\leq 698 \qquad \text{or} \qquad n_1\leq 697. $$

We now apply the LLL-reduction to $\Lambda_5$, i.e. we apply it to inequality \eqref{eq:Lambda5}.
That is we consider the lattice generated by the columns of the matrix
\begin{equation*}
\mathcal{A}=\begin{pmatrix}
    1 & 0  & 0 \\
    0 & 1 & 0 \\
    \lfloor C\log \tau(t_1) \rfloor & \lfloor C\log \tau(t_2) \rfloor & \lfloor C\log \alpha \rfloor 
\end{pmatrix},
\end{equation*}
where $1\leq t_1\leq 348$ and $1\leq t_2 \leq 698$ and choose $C=10^{200}$. With this choice we get $a_1,a_2<8.89 \cdot 10^{34}=X_1=X_2$ and 
$|\Delta|<0.7 n_1n_2<1.13\cdot 10^{54}=X_3$. Moreover we take $ c_3=\log \alpha $, $2.2 a_2 < 1.95\cdot 10^{35}=c_4$ and $H=n_1$.

Provided that $t_1,t_2\neq 2$, $t_1\neq t_2$ and $(t_1,t_2)\neq (1,10), (10,1)$ the LLL-reduction method succeeds in showing that $n_1\leq 866$. Let us consider now the exceptional cases.

In the case that $t_1=2$ or $t_2=2$ or $t_1=t_2\neq 2$ we obtain
$$|\Lambda_5|=\left|\tilde \Delta \log \alpha - \tilde a \log \tau(t) \right|$$
with
\begin{itemize}
 \item $\tilde \Delta=\Delta-a_1$ and $\tilde a=-a_2$, if $t_1=2$ and $t=t_2\neq 2$;
 \item $\tilde \Delta=\Delta+a_2$ and $\tilde a=a_1$, if $t_2=2$ and $t=t_1\neq 2$;
 \item $\tilde \Delta=\Delta$ and $\tilde a=a_1-a_2$, if $t_1=t_2=t\neq 2$.
\end{itemize}
We apply Lemma \ref{lem:cont-frac} to the inequality
$$\left| \tilde a \frac{\log \tau(t)}{\log \alpha}-\tilde \Delta \right|<\frac{2.2 a_2}{\log \alpha} \alpha^{-n_1}. $$
Note that in any case $|\tilde a|\leq a_2$ and we compute for all $1\leq t \leq 698$ with $t\neq 2$ the continued fraction of $\mu=\frac{\log\tau(t)}{\log \alpha}$. In any case it is sufficient to compute the first $84$ convergents to find a convergent $p_\ell/q_\ell$ such that $q_\ell>a_2$. In particular, we find in any case an upper bound for $n_1$ and altogether obtain $n_1<362 $ in this case.

Similarly in the case that $(t_1,t_2)=(1,10),(10,1)$ we get
$$|\Lambda_5|=\left|\tilde \Delta \log \alpha - \tilde a \log \sqrt{5} \right|$$
with
\begin{itemize}
 \item $\tilde \Delta=\Delta-5a_1+2a_2$ and $\tilde a=2a_1+a_2$, if $t_1=1$ and $t_2=10$;
 \item $\tilde \Delta=\Delta+5a_2-2a_1$ and $\tilde a=-2a_2-a_1$, if $t_1=10$ and $t_2=1$.
\end{itemize}
In both cases we obtain 
$$\left|\tilde a \frac{\log \sqrt{5}}{\log \alpha}-\tilde \Delta \right|<\frac{2.2 a_2}{\log \alpha} \alpha^{-n_1}$$
by an application of Lemma \ref{lem:cont-frac} to this inequality we obtain $n_1\leq 345$.

In the case that $t_1=t_2=2$ and $\Lambda_5\neq 0$ we get
$$\log \alpha\leq |\tilde \Delta \log \alpha|=|\Lambda_5|<2.2\frac{a_2}{\alpha^n_1},$$
which implies that $n_1\leq 168$. Note that in the case that $\Lambda_5=0$ we have by Lemma~\ref{lem:Lambda5-vanish} that $y=2$.

Let us assume for the moment that $y\neq 2$. Then we get that $n_1\leq 866$, therefore we conclude that $\log y<418.3$. Considering inequality \eqref{eq:Lambda2-ieq} yields then $n_2<3.86\cdot 10^{18}$.

In the case that $y=2$ we can directly consider inequality \eqref{eq:Lambda2-ieq} and get $n_2<2.69\cdot 10^{15}$.

With this new bound for $n_2$ we can do the whole reduction process once again. And obtain first
that $\min\{n_1-m_1,n_2-m_2\}\leq 188$ by using Lemma \ref{lem:cont-frac}. Then we obtain by the LLL-reduction that $\max\{n_1-m_1,n_2-m_2\}\leq 305$ and again using LLL-reduction we finally obtain that $n_1\leq 470$. However, this implies $\log y< 227.71$ and $n_2< 4.37\cdot 10^{17}$.

Thus we have proved so far:

\begin{proposition}\label{prop:boundn}
 Assume that for a fixed $y>1$ there exist two solutions $(n_1,m_1,a_1)$ and $(n_2,m_2,a_2)$ to \eqref{eq:main-gen} with $n_1<n_2$. Then we have $\log y< 227.71$  and  $n_2< 4.37\cdot 10^{17}$. Moreover, if $y\neq 2$, then we also have that $n_1\leq 470$.
\end{proposition}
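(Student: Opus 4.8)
The plan is to combine the various bounds and linear-form estimates already assembled in Sections~\ref{sec:bound} and~\ref{sec:reduction} into a single reduction cycle and iterate it. The starting point is the pair of (large) bounds from the proposition concluding Section~\ref{sec:bound}, namely $n_2<2.03\cdot 10^{112}$, $\log y<2.86\cdot 10^{42}$ and $n_1<5.93\cdot 10^{42}$, together with the intermediate output already recorded in Lemma~\ref{lem:Red2}: $n_2<1.26\cdot 10^{35}$, $\log y<6.12\cdot 10^{18}$, $n_1<1.28\cdot 10^{19}$. From there I would run the three reduction sub-steps in sequence: first Lemma~\ref{lem:cont-frac} applied to \eqref{eq:Lambda3-red} (using roughly the first $74$ convergents of $\log\sqrt5/\log\alpha$) to get $\min\{n_1-m_1,n_2-m_2\}\le 348$; then the LLL-reduction (Lemma~\ref{lem:real-reduce}) applied to $\Lambda_{4a}$ and $\Lambda_{4b}$ for each admissible value $t\le 348$, yielding $\max\{n_1-m_1,n_2-m_2\}\le 698$ or $n_1\le 697$; and finally the LLL-reduction applied to $\Lambda_5$ for each pair $1\le t_1\le 348$, $1\le t_2\le 698$, which in the generic case gives $n_1\le 866$.

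The next step is to treat the multiplicatively dependent exceptional pairs separately, exactly as flagged by Corollary~\ref{cor:non-vanishing} and Theorem~\ref{th:independence}. When $t_1=2$, $t_2=2$, $t_1=t_2$, or $(t_1,t_2)\in\{(1,10),(10,1)\}$, the three-dimensional linear form $\Lambda_5$ collapses to a two-term form $\tilde\Delta\log\alpha-\tilde a\log\xi$ with $\xi\in\{\alpha,\sqrt5,\tau(t)\}$, and I would bound $n_1$ via Lemma~\ref{lem:cont-frac} applied to $|\tilde a(\log\xi/\log\alpha)-\tilde\Delta|<(2.2a_2/\log\alpha)\alpha^{-n_1}$, computing enough convergents (about $84$) that the denominator exceeds $a_2$; this gives $n_1<362$ in those cases. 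The genuinely degenerate case $t_1=t_2=2$ with $\Lambda_5=0$ is handled by Lemma~\ref{lem:Lambda5-vanish}, which forces $y=2$; if instead $\Lambda_5\ne0$ there, $\log\alpha\le|\tilde\Delta\log\alpha|=|\Lambda_5|<2.2a_2\alpha^{-n_1}$ gives $n_1\le 168$.

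With $n_1\le 866$ in hand (or the sharper bounds in the exceptional subcases), I would feed this back through the $\Lambda_{11}$ or $\Lambda_{21}$ estimate (as in the ``final steps'' of Section~\ref{sec:bound}) to get $\log y<418.3$, and then substitute into inequality \eqref{eq:Lambda2-ieq} to bound $n_2$: one gets $n_2<3.86\cdot 10^{18}$ when $y\ne2$, and the marginally better $n_2<2.69\cdot 10^{15}$ when $y=2$ (where the extra input $n_1$-independence is available). The final loop is then to run the whole reduction cycle one more time starting from this much smaller $n_2$: Lemma~\ref{lem:cont-frac} gives $\min\{n_1-m_1,n_2-m_2\}\le 188$, the LLL-reduction on $\Lambda_{4a},\Lambda_{4b}$ gives $\max\{n_1-m_1,n_2-m_2\}\le 305$, and the LLL-reduction on $\Lambda_5$ gives $n_1\le 470$; this in turn produces $\log y<227.71$ and, via \eqref{eq:Lambda2-ieq}, $n_2<4.37\cdot 10^{17}$, which is the claimed statement. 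The main obstacle is purely computational bookkeeping: one must carry out the LLL-reduction separately for each of the roughly $348\times 698$ pairs $(t_1,t_2)$ (and similarly for $\Lambda_{4a},\Lambda_{4b}$ over all $t$), verifying in each instance that the hypothesis $c_1^2\ge T^2+S$ of Lemma~\ref{lem:real-reduce} actually holds with the chosen $C$; the exceptional multiplicatively dependent cases have to be isolated correctly and routed through Lemma~\ref{lem:cont-frac} instead, and one must be careful that the continued-fraction bounds in those cases do not exceed the LLL bounds obtained elsewhere.
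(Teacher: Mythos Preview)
Your proposal is correct and follows essentially the same approach as the paper: you take Lemma~\ref{lem:Red2} as input, run the continued-fraction reduction on $\Lambda_3$, then LLL on $\Lambda_{4a},\Lambda_{4b}$, then LLL on $\Lambda_5$ (with the multiplicatively dependent pairs routed through Lemma~\ref{lem:cont-frac} and the degenerate $t_1=t_2=2$ case handled via Lemma~\ref{lem:Lambda5-vanish}), feed the resulting $n_1$-bound back through $\Lambda_{11}/\Lambda_{21}$ and \eqref{eq:Lambda2-ieq}, and iterate once more. The intermediate numerical bounds you quote ($348$, $698$, $866$, $362$, $168$, $418.3$, $3.86\cdot10^{18}$, $188$, $305$, $470$, $227.71$, $4.37\cdot10^{17}$) match the paper's exactly.
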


That is the possible list of $y$'s such that \eqref{eq:main-gen} has more than one solution can be characterized as follows:

\begin{corollary}\label{cor:y-char}
 Let $y\neq 2$ be fixed and assume that the Diophantine equation \eqref{eq:main-gen} has more than one solution. Then there exist integers $1<m<n\leq 470$ and a positive integer $a$ such that $F_n+F_m=y^a$.
\end{corollary}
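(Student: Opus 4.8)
The plan is to read the statement off directly from Proposition~\ref{prop:boundn}; no new machinery is required. Suppose $y\neq 2$ is fixed and that \eqref{eq:main-gen} admits more than one solution. Among these solutions pick two, and let $(n_1,m_1,a_1)$ and $(n_2,m_2,a_2)$ denote them, indexed so that $n_1\le n_2$.

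First I would recall the preliminary observations made at the start of Section~\ref{sec:bound}: by the constraint $1<m<n$ together with the uniqueness of the Zeckendorf expansion, two distinct solutions necessarily have distinct exponents, so after relabelling we may assume $a_1<a_2$; and then the first lemma of Section~\ref{sec:bound} gives $n_1<n_2$. Consequently the hypotheses of Proposition~\ref{prop:boundn} are satisfied by the pair $(n_1,m_1,a_1)$, $(n_2,m_2,a_2)$.

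Now I would simply invoke the final clause of Proposition~\ref{prop:boundn}: since $y\neq 2$, it yields $n_1\le 470$. Hence the triple $(n,m,a):=(n_1,m_1,a_1)$ consists of integers $1<m<n\le 470$ together with a positive integer $a$ satisfying $F_n+F_m=y^a$, which is exactly the claim.

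I expect essentially no obstacle here: the corollary is a bookkeeping reformulation of the bound on the \emph{smaller} index already established in Proposition~\ref{prop:boundn}. The only point that deserves an explicit word is the passage from ``more than one solution'' to ``two solutions with $n_1<n_2$'', i.e.\ justifying that one may single out the solution of least index and that this particular solution obeys $n_1\le 470$ (rather than only the far weaker $n_2<4.37\cdot 10^{17}$); both facts are immediate from how Proposition~\ref{prop:boundn} is phrased. No further linear forms in logarithms, continued fraction arguments, or lattice reductions enter at this step.
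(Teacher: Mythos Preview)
Your proposal is correct and matches the paper's approach: the corollary is stated there as an immediate consequence of Proposition~\ref{prop:boundn} with no separate proof, and you have spelled out exactly the bookkeeping the paper leaves implicit. One cosmetic remark: your opening choice ``indexed so that $n_1\le n_2$'' is superfluous, since you then relabel to enforce $a_1<a_2$ and only afterwards invoke the lemma yielding $n_1<n_2$; you can simply start from $a_1<a_2$ after the Zeckendorf argument.
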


\section{The final steps}

According to Corollary \ref{cor:y-char} we are left to solve \eqref{eq:main-gen} for every $y$ such that 
\begin{itemize}
\item $y^a=F_n+F_m$ for some $a>0$ and $1<m<n\leq 470$ or
\item $y=2$. 
\end{itemize}
Therefore we are left to solve about $\binom{470}2 \sim 110000$ Diophantine equations.

Let us assume that $y\neq 2$. To solve all these equations we proceed as follows. For every pair of integers $(n_1,m_1)$, with $1<m_1<n_1 \leq 470$ we perform the following steps which have been implemented in Sage \cite{sagemath}:

\textbf{Step I:} We compute $\tilde y=F_{n_1}+F_{m_1}$ and find the largest exponent $a_1$ such that there exists an integer $y$ such that $y^{a_1}=\tilde y$. In most cases we will have $a_1=1$ and $y=\tilde y$.

\textbf{Step II:} We compute $\mu=\frac{\log \alpha}{\log y}$ to a sufficiently high precision (in our case $3000$-bit precision was sufficiently large) and compute then the continued fraction expansion of $\mu$. More precisely we compute successively the convergents $p_\ell/q_\ell$ to $\mu$. For each convergent we compute
$$\kappa=\frac{1}{\|q_\ell \mu\|2N}$$
where $N=4.37\cdot 10^{17}$ is the upper bound for $n_2$. If $\kappa>1$ we store the pair $(\kappa,q_\ell)$ in the list \texttt{KAPPA}. If we have found $250$ such pairs we stop\footnote{The reason why we have to compute so many pairs will be explained in Remark \ref{rem:pairs}}.

\textbf{Step III:} We apply the Baker-Davenport reduction (Lemma \ref{lem:BakDav1}) to the inequality
$$\left|n_2\frac{\log \alpha}{\log y}+\frac{\log \sqrt{5}}{\log y}-a\right|<\frac{2.03}{(\log y) \alpha^{n_2-m_2}} $$
which is easily obtained from inequality \eqref{eq:Lambda1}. For all pairs $(\kappa,q)$ in the List \texttt{KAPPA} we compute $\left\|q \tau\right\|$, with $\tau=\frac{\log \sqrt 5}{\log y}$ until we find a pair $(q,\kappa)$ such that $\|q\tau\|>\frac 1{\kappa}$. Then according to Lemma \ref{lem:BakDav1} we have 
$$n_2-m_2<\frac{\log(2\kappa q 2.03/\log y)}{\log \alpha}=\tilde N.$$

Let us note that for all instances we can find such a pair $(\kappa,q)$ and are able to compute a rather small bound $n_2-m_2$ in each case.

\textbf{Step IV:} For each $1\leq t \leq \tilde N$ we consider the inequality
$$\left|n_2\frac{\log \alpha}{\log y}+\frac{\log \tau(t)}{\log y}-a\right|<\frac{1.1}{(\log y) \alpha^{n_2}} $$
which we deduce from inequality \eqref{eq:Lambda2}. In most cases we can find a pair $(\kappa,q)$ that satisfies $\|q \tau\|>1/\kappa$ with $\tau= \frac{\log \tau(t)}{\log y}$ and therefore we find in the most cases a small upper bound for $n_2$.

The exceptional cases, where we could not find a suitable pair $(\kappa,q)$ are the following:
\begin{itemize}
 \item $(n_1,m_1,t)=(4,2,6),(5,4,6),(7,4,6)$, i.e. $y=2$ and $t=6$;
 \item $(n_1,m_1,t)=(n,n-1,2n+2)$ with $n$ is even.
\end{itemize}
These special cases will be treated in Steps V and VI.

\textbf{Step V:} We consider the case that $y=2$ and $t=6$. Let us note that this case was also a special case in the paper of Bravo and Luca \cite{Bravo:2016}. They used a special identity involving Lucas numbers to resolve this case. To keep this paper as self contained as possible we resolve this case by using Lemma \ref{lem:cont-frac}.

Therefore we note that $\tau(6)=\frac{\alpha^6+1}{\sqrt{5}}=2 \alpha^3$ and inequality \eqref{eq:Lambda2} turns into
$$|(n_2+3)\log \alpha - (a_2-1)\log 2|<\frac{1.1}{\alpha^{-n_2}}.$$
An application of Lemma \ref{lem:cont-frac} yields again a small upper bound for $n_2$. In particular we obtain $n_2 \leq 94$ in this case.

\textbf{Step VI:} Now, we consider the case that $n_1=m_1+1$, with $n_1$ even and $t=n_2-m_2=2(n_1+1)$. Let us compute
\begin{align*}
 \tau(t)&=\frac{\alpha^t+1}{\sqrt{5}}=\frac{\alpha^{2n_1+2}+1}{\sqrt{5}}\\
 &=\alpha^{n_1+1} \frac{\alpha^{n_1+1}+\alpha^{-n_1-1}}{\sqrt{5}}\\
 &=\alpha^{n_1+1} \frac{\alpha^{n_1+1}-\beta^{n_1+1}}{\sqrt{5}}=\alpha^{n_1+1} F_{n_1+1}\\
 &=\alpha^{n_1+1}(F_{n_1}+F_{m_1})=\alpha^{n_1+1}y^{a_1}.
\end{align*}
Therefore inequality \eqref{eq:Lambda2} turns into
$$|(n_2+n_1+1)\log \alpha -(a_2-a_1)\log y|<\frac{1.1}{\alpha^{-n_2}} $$
and we can apply Lemma \ref{lem:cont-frac} to deduce a small upper bound for $n_2$ in each case.

Therefore we have found for each instance a rather small upper bound $\tilde N_2$ for $n_2$. Due to Lemma \ref{lem:el-bounds} we also have 
$$a_2<\frac{\tilde N_2 \log \alpha}{\log y},$$
which provides a small upper bound $A$ for $a_2$.

\textbf{Step VII:} For each $1\leq a \leq A$ we compute the Zeckendorf expansion of $y^a$ by a greedy digit expansion algorithm. If the Zeckendorf expansion of $y^a$ has more than two non-zero digits, then there exists no solution $(n,m,a)$ to \eqref{eq:main-gen} and we can discard $a$. If the Zeckendorf expansion is of the form
$y^a=F_n+F_m$ with $n-m>1$ we have found a solution to \eqref{eq:main-gen}, namely $(n,m,a)$. In the
case that the Zeckendorf expansion is of the form $y^a=F_n$, then $(n-1,n-2,a)$ is a solution to \eqref{eq:main-gen}. Thus we find all possible solutions to \eqref{eq:main-gen}.

In all cases except, those cases which correspond to $y=2,3,4,6,10$ we have found exactly one solution, of course the initial solution $F_{n_1}+F_{m_1}=y^a=\tilde y$. Therefore our main theorem, Theorem \ref{th:main} is proved in the case that $y\neq 2$.

Finally we have to discuss the case that $y=2$. Of course this case has already been resolved by Bravo and Luca \cite{Bravo:2016}. However, also our computations made so far are sufficient to resolve this case. Therefore we note that $4=2^2=F_4+F_2$, thus the case $y=2$ has been covered by our previous computations. In particular, the solutions we get by considering the pair $(n,m)=(4,2)$ provide all solutions in the case that $y=2$. Thus the proof of Theorem \ref{th:main} is complete.

Let us note that the computation of all steps for all instances $2\leq m_1<n_1\leq 470$ took about four hours on a common desk top PC (Intel Core i7-8700) on a single core. 

\begin{remark}\label{rem:pairs}
 Applying the Baker-Davenport reduction to Inequality \eqref{IEq:BakerDav} with randomly chosen $\mu$ and $\tau$ the list \texttt{KAPPA} computed in Step II can be rather short (in many cases the first five $\kappa$'s will be sufficient). But, in our case the choice of $\mu$ and $\tau$ corresponds to a large solution of our main equation \eqref{eq:main-gen}, namely to the solution $F_{n_1}+F_{m_1}=y^{a_1}$. These rather large solutions yield good simultaneous approximations of our quantities $\mu$ and $\tau$, thus we need a rather large $\kappa$ to be successful in applying the Baker-Davenport reduction method.
 
 For instance, let us consider the case that $n_1=m_1+1$, $t=2(n_1+1)$ and $n_1$ is odd. Then we obtain by a similar computation as in Step VI that
 $$\tau(t)=\alpha^{n_1+1}\frac{\alpha^{n_1+1}+\beta^{n_1+1}}{\sqrt 5}=\alpha^{n_1+1}y^{a_1}+\frac{2}{\sqrt 5}.$$
 and therefore we have
 $$\tau=\frac{\log \tau(t)}{\log y}=\frac{(n_1+1)\log \alpha+a_1\log y +r}{\log y}=(n_1+1)\mu+a_1+\frac{r}{\log y},$$
 with $|r|<4\alpha^{-2n_1-2}$ and $\mu=\frac{\log \alpha}{\log y}$. Therefore, only a pair $(\kappa,q)$ with large $\kappa$ can satisfy the necessary assumptions that $\|q\mu\|<\frac{1}{2\kappa N}$ and $\|q\tau\|>\frac{1}{\kappa}$ both hold. Thus we have to compute an unusual long list \texttt{Kappa} in Step II, to cover the case that $n_1=m_1+1$, $t=2(n_1+1)$ and $n_1$ is odd. 
\end{remark}


\begin{thebibliography}{10}

\bibitem{Baker:NT}
A.~Baker.
\newblock {\em A concise introduction to the theory of numbers}.
\newblock Cambridge University Press, Cambridge, 1984.

\bibitem{Baker:1969}
A.~Baker and H.~Davenport.
\newblock The equations $3x^2-2=y^2$ and $8x^2-7=z^2$.
\newblock {\em Quart. J. Math. Oxford Ser. (2)}, 20:129--137, 1969.

\bibitem{Bravo:2014}
J.~J. Bravo and F.~Luca.
\newblock Powers of two as sums of two {L}ucas numbers.
\newblock {\em J. Integer Seq.}, 17(8):Article 14.8.3, 12, 2014.

\bibitem{Bravo:2016}
J.~J. {Bravo} and F.~{Luca}.
\newblock {On the Diophantine equation \(F_n + F_m=2^a\)}.
\newblock {\em {Quaest. Math.}}, 39(3):391--400, 2016.

\bibitem{deWeger:1987}
B.~M.~M. de~Weger.
\newblock Solving exponential {D}iophantine equations using lattice basis
  reduction algorithms.
\newblock {\em J. Number Theory}, 26(3):325--367, 1987.

\bibitem{Kebli:2021}
S.~Kebli, O.~Kihel, J.~Larone, and F.~Luca.
\newblock On the nonnegative integer solutions to the equation 
$f_n \pm f_m=y^a$.
\newblock {\em Journal of Number Theory}, 220:107 -- 127, 2021.

\bibitem{Laurent:2008}
M.~{Laurent}.
\newblock {Linear forms in two logarithms and interpolation determinants. II}.
\newblock {\em {Acta Arith.}}, 133(4):325--348, 2008.

\bibitem{Luca:2018}
F.~{Luca} and V.~{Patel}.
\newblock {On perfect powers that are sums of two Fibonacci numbers}.
\newblock {\em {J. Number Theory}}, 189:90--96, 2018.

\bibitem{Matveev:2000}
E.~M. Matveev.
\newblock An explicit lower bound for a homogeneous rational linear form in
  logarithms of algebraic numbers. {II}.
\newblock {\em Izv. Ross. Akad. Nauk Ser. Mat.}, 64(6):125--180, 2000.

\bibitem{Schinzel:1974}
A.~{Schinzel}.
\newblock {Primitive divisors of the expression A\(^n-B^n\) in algebraic number
  fields}.
\newblock {\em {J. Reine Angew. Math.}}, 268/269:27--33, 1974.

\bibitem{Schinzel:1993}
A.~{Schinzel}.
\newblock {An extension of the theorem on primitive divisors in algebraic
  number fields}.
\newblock {\em {Math. Comput.}}, 61(203):441--444, 1993.

\bibitem{Smart:DiGL}
N.~P. Smart.
\newblock {\em The algorithmic resolution of {D}iophantine equations},
  volume~41 of {\em London Mathematical Society Student Texts}.
\newblock Cambridge University Press, Cambridge, 1998.

\bibitem{sagemath}
{The Sage Developers}.
\newblock {\em {S}ageMath, the {S}age {M}athematics {S}oftware {S}ystem
  ({V}ersion 9.0)}, 2020.
\newblock {\tt https://www.sagemath.org}.

\end{thebibliography}

\def\cprime{$'$}

\end{document}